\documentclass[12pt,a4paper,oneside]{amsart}
\usepackage{amsfonts, amsmath, amssymb, amsthm, mathtools}
\usepackage[colorlinks=true,citecolor=blue]{hyperref}
\usepackage[margin=1.4in]{geometry}
\usepackage[T1]{fontenc}
\usepackage{newtxtext}
\usepackage{newtxmath}
\usepackage{graphicx}
\usepackage{verbatim}
\usepackage{tikz}
\usepackage{fix-cm}
\usepackage{tikz-cd}

\newtheorem{theorem}{Theorem}
\newtheorem*{theorem*}{Theorem}
\newtheorem{lemma}[theorem]{Lemma}
\newtheorem*{lemma*}{Lemma}
\newtheorem{prop}[theorem]{Proposition}

\newtheorem{claim}[theorem]{Claim}
\newtheorem{corollary}[theorem]{Corollary}

\newtheorem*{corollary*}{Corollary}

\theoremstyle{definition}

\newtheorem{remark}[theorem]{Remark}
\newtheorem*{remark*}{Remark}

\newtheorem{problem}[theorem]{Problem}

\newtheorem*{problem*}{Problem}

\newtheorem*{conj*}{Conjecture}

\newcommand{\conv}{\mbox{conv}}

\DeclareMathOperator{\bary}{sd}

\newcommand{\cubical}[2]{Q_{#1}^{#2}}

\newcommand{\bry}[1]{\partial #1}
\newcommand{\slice}[2]{S_{#1,#2} } 
\newcommand{\co}{\text{co}N}
\newcommand{\cN}[1]{\text{co}N(#1)}
\newcommand{\faces}[2]{{#1}^{(#2)}}
\newcommand{\inv}[1]{#1^{-1}}

\begin{document} 

\title{Overlap-Helly theorems}

\author{Andreas F. Holmsen}
\author{Alfredo Hubard}

\thanks{The first author was supported by the Institute for Basic Science (IBS-R029-C1)}

\date{\today}

 \address{Andreas F. Holmsen, 
 \hfill \hfill \linebreak 
 Department of Mathematical Sciences,  \hfill \hfill \linebreak
KAIST, 
 Daejeon, South Korea,  \hfill \hfill \linebreak and \hfill \hfill \linebreak
 Discrete Mathematics Group, \hfill\hfill\linebreak
 Institute for Basic Science, \hfill\hfill\linebreak
  Daejeon, South Korea. }
 \email{andreash@kaist.edu}

  \address{Alfredo Hubard, 
 \hfill \hfill \linebreak 
 LIGM,  \hfill \hfill \linebreak
Universite Gustave Eiffel, 
 Champs-sur-Marne, France.  \hfill \hfill }
 \email{alfredo.hubard@univ-eiffel.fr}

\begin{abstract} 
In this paper we introduce a generalization of Helly's theorem closely connected to B\'ar\'any-Gromov overlap theorems (also called selection lemmas). Our main result implies both the topological colorful Helly of Kalai and Meschulam and Karasev's topological centerpoint theorem. We further investigate the topological fractional Helly theorem from this overlap perspective, and show an overlap theorem for dense complexes (a continuous second selection lemma for tame maps).
\end{abstract}

\maketitle 

\section{Introduction}

\subsection{} An \emph{overlap theorem} is a geometric-topological analogue of the pigeonhole principle. It asserts that any map in a certain class of functions from $X$ to $Y$ must have a large fiber.
The main goal of this paper is to show that several classical Helly-type theorems in combinatorial convexity, as well as their topological analogues, can be viewed as special instances of suitable overlap theorems. A similar point of view taken of Radon-Tverberg type results received lot of attention during the last few decades.

In our results, the target space will be the polyhedron $\|Y\|$ of a (locally finite) simplicial complex $Y$; that is, the underlying topological space of $Y$, consisting of the union of its simplices with the topology induced by the geometric realization.
Our first main result of this type is an overlap generalization of the \emph{colorful Helly theorem} \cite{col-hell}. 

\begin{theorem}\label{thm:convex version}
Let $m >  d\geq 1$ be integers, let $F_1, \dots, F_{d+1}$ be finite families of compact convex sets in $\mathbb{R}^m$, and let $Y$ be a $d$-dimensional simplicial complex. If $C_1\cap \cdots \cap C_{d+1}\neq\emptyset$ for every choice $C_1\in F_1, \dots, C_{d+1}\in F_{d+1}$, then for every continuous map $f \colon \mathbb{R}^m \to \|Y\|$ there exists a point $y\in \|Y\|$ such that $f^{-1}(y)$ intersects every member of one of the families $F_i$.
\end{theorem}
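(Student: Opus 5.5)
The plan is to argue by contradiction and reduce the statement to a Sperner/KKM-type lemma on the join $F_1 * \cdots * F_{d+1}$. Regard each $F_i$ as a $0$-dimensional complex, so $J = F_1 * \cdots * F_{d+1}$ is a $d$-dimensional, $(d-1)$-connected complex. By hypothesis every simplex of $J$ (a colorful subfamily) has nonempty intersection.

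First I reformulate the conclusion. Since $f^{-1}(y)\cap C\neq\emptyset$ if and only if $y\in f(C)$, the theorem says that some family $F_i$ has $\bigcap_{C\in F_i} f(C)\neq\emptyset$. Suppose this fails for every $i$. Each $f(C)$ is compact, so for each $i$ the open sets $\|Y\|\setminus f(C)$, $C\in F_i$, cover $\|Y\|$. Take a partition of unity subordinate to this cover. This gives a continuous map $g_i\colon \|Y\|\to \Delta_{F_i}$ such that $g_i(y)$ is supported on members $C\in F_i$ with $y\notin f(C)$. Combining the $g_i$ in join coordinates (averaging with weights $\tfrac{1}{d+1}$) gives a map $\Phi\colon \|Y\|\to \|J\|$. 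I would like to arrange, after a simplicial approximation of $f$ and a suitable refinement of $Y$, that $\Phi$ is simplicial-like, so that the dimension bound $\dim Y = d$ is recorded in the image of $\Phi$. How exactly this is arranged depends on the final step below.

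Next I use convexity to build a map back, $\psi\colon \|J\|\to\mathbb{R}^m$. For every simplex $\tau$ of $J$ choose a point $p_\tau\in\bigcap\tau$, and define $\psi$ on the barycentric subdivision $\bary J$ by sending the barycenter of $\tau$ to $p_\tau$ and extending affinely. Let $x$ lie in the simplex of $\bary J$ given by a chain $\tau_0\subset\cdots\subset\tau_k$. Then every $p_{\tau_j}$ lies in $\bigcap\tau_0$, which is convex, so $\psi(x)\in C$ for every $C\in\tau_0\subseteq \mathrm{supp}(x)$.

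Now consider the composite $\Theta=\Phi\circ f\circ\psi\colon \|J\|\to\|J\|$. Take a point $x$ and a vertex $C\in\tau_0$. Since $\psi(x)\in C$, the point $f(\psi(x))$ lies in $f(C)$, so the coordinate of $\Theta(x)$ at $C$ vanishes. Thus $\Theta$ moves points away from (part of) their own support, a "no fixed face" condition. The contradiction should come from a KKM/Sperner-type lemma for joins of discrete sets, in the spirit of the colorful KKM theorem or Meshulam's lemma. Such a lemma would say that a map $\|J\|\to\|J\|$ that factors through a $d$-dimensional polyhedron and is homotopic to the identity relative to this avoidance structure must somewhere keep a full colorful face. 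The map $\Theta$ factors through $\|Y\|$, which has dimension $d$. The map should be nontrivial on $H_d$: it is homotopic to the identity by a straight-line homotopy, which is legitimate because of the support condition. I would combine this with a degree computation on the top homology of $J$. The hypothesis $m>d$ should enter so that $\psi$ can be perturbed into general position, and this is probably also the source of the needed strictness.

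The main obstacle is this last step. The avoidance I actually obtain concerns only the minimal face $\tau_0$ of the chain, not the whole support. So I must formulate the correct Sperner-type lemma on $\bary J$ and verify it, presumably by a chain-level argument showing that the induced map on $H_d(J)$ has to be both zero and nonzero. I would first try the simplest version: show $\Theta$ is homotopic to the identity, which requires the avoidance to prevent antipodal cancellations. Separately, the factorization through the $d$-dimensional $\|Y\|$ together with the boundary behaviour of $\Theta$ should force $\Theta_*=0$ on some relative top class, for example relative to a link or cycle of a colorful simplex.
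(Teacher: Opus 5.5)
There is a genuine gap. Your outline stops exactly where the proof has to happen, and the concrete ingredients you propose for that step do not work.

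\emph{First, $\Phi$ does not map into $\|J\|$.} Averaging the partition-of-unity maps $g_i$ does not produce a point of $\|J\|$. A point of $\|J\|$ has at most one nonzero coordinate in each $F_i$. By contrast, $\frac{1}{d+1}\sum_i g_i(y)$ lies in the full simplex on $F_1\sqcup\cdots\sqcup F_{d+1}$, which is contractible. So $\Theta$ is not a self-map of $\|J\|$.

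\emph{Second, $\dim Y=d$ is never used.} This is the more serious problem, because the hypothesis must be used. Take the coordinate projection $\mathbb{R}^{d+2}\to\mathbb{R}^{d+1}$ and the families $\{S_{i,j}\times\{0\}\}_{j\in[n]}$ with $n\ge 2$. All colorful intersections are nonempty, yet every fiber meets at most one member of each family.

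\emph{Third, there is no straight-line homotopy from $\Theta$ to the identity.} Let $x$ be the barycenter of a facet $\tau$ of $J$. Then $\tau_0=\tau$, so $\mathrm{supp}(\Theta(x))$ is disjoint from $\tau$, and $x$ and $\Theta(x)$ never share a simplex. In fact any self-map of $\|J\|$ with your avoidance property is fixed-point free. When all $|F_i|=2$ we have $\|J\|\cong S^d$, and such a map is homotopic to the antipodal map, of degree $(-1)^{d+1}$; for even $d$ it is not homotopic to the identity at all.

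\emph{Fourth, the source of $\Theta_*=0$ is misidentified.} Factoring through a $d$-dimensional polyhedron does not kill $H_d$. The vanishing has to come from factoring through the contractible space $\mathbb{R}^m$. The hypothesis $m>d$ plays no role in either direction.

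The missing ideas are a balanced coloring and a fixed-point theorem; with them your outline closes.
\begin{enumerate}
\item Let $Y_0$ be a finite subcomplex containing $f(\mathrm{conv}\,\psi(\|J\|))$, and let $Z$ be a fine iterated barycentric subdivision of $Y_0$. Since $\dim Y_0\le d$, $Z$ is properly $(d+1)$-colored; this is the only place $\dim Y=d$ enters.
\item Under the contradiction hypothesis, a Lebesgue-number argument lets you send each vertex $v$ of color $i$ to some $C_v\in F_i$ with $f(C_v)$ disjoint from the closed star of $v$. This defines a simplicial map $\Phi\colon\|Z\|\to\|J\|$ with $C\notin\mathrm{supp}(\Phi(y))$ whenever $y\in f(C)$.
\item Then $\Theta=\Phi\circ f\circ\psi$ has no fixed point, because $\mathrm{supp}(\Theta(x))$ misses the nonempty set $\tau_0\subseteq\mathrm{supp}(x)$.
\item On the other hand, $\Theta$ is null-homotopic: contract $\psi$ inside the convex set $\mathrm{conv}\,\psi(\|J\|)$. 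Since $\|J\|$ is connected, the Lefschetz number of $\Theta$ is $1$, so $\Theta$ has a fixed point, a contradiction.
\end{enumerate}
Equivalently, being fixed-point free would force $\mathrm{tr}(\Theta_*|_{H_d(J)})=(-1)^{d+1}\neq 0$. This is the ``both zero and nonzero on $H_d$'' you were after.

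Repaired this way, your route genuinely differs from the paper's. The paper maps the grid $[1,n]^{d+1}$ into $\mathbb{R}^m$ with $S_{i,j}$ going into $C_{i,j}$, using convexity. It then approximates the composite with $f$ simplicially into a balanced subdivision of $Y$. Finally it applies the HEX lemma, proved via the no-retraction theorem, to get a monochromatic, hence constant, path crossing the cube. Both arguments use $\dim Y=d$ only through the balanced coloring. Yours would replace the cube and HEX by the join $J$ and the Lefschetz fixed-point theorem.
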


Note that the special case when $Y$ is a triangulation of $\mathbb{R}^d$ and $f$ is a \emph{linear map}, follows from the classical colorful Helly theorem  \cite{col-hell} (discovered by Lov{\'a}sz and independently by B{\'a}r{\'a}ny), since in this case convex sets are mapped to convex sets. Thus Theorem \ref{thm:convex version} generalizes the colorful Helly theorem in two aspects; we allow $f$ to be an arbitrary continuous map, and $Y$ may be any $d$-dimensional simplicial complex. By setting $F_1 = \cdots = F_{d+1}$ we obtain an overlap generalization of Helly's classical theorem on intersections of convex sets. 

\begin{corollary}\label{cor:just helly}
Let $m >  d\geq 1$ be integers, let $F$ be a finite family of compact convex sets in $\mathbb{R}^m$, and let $Y$ be a $d$-dimensional simplicial complex. If every $d+1$ or fewer members of $F$ have nonempty intersection, then for every continuous map $f \colon \mathbb{R}^m \to \|Y\|$ there exists a point $y\in \|Y\|$ such that $f^{-1}(y)$ intersects every member of $F$.
\end{corollary}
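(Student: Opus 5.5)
Corollary~\ref{cor:just helly} follows by specializing Theorem~\ref{thm:convex version} to identical colour classes: I set $F_1 = F_2 = \cdots = F_{d+1} = F$ and check that its hypothesis holds.

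\emph{Checking the colorful hypothesis.} Take any choice $C_1\in F_1,\dots,C_{d+1}\in F_{d+1}$. Each $C_i$ lies in $F$, but they need not be distinct. The distinct sets among them form a subfamily $\{C_1,\dots,C_{d+1}\}\subseteq F$ with at most $d+1$ members. By assumption every $d+1$ or fewer members of $F$ have nonempty intersection, so $C_1\cap\cdots\cap C_{d+1}\neq\emptyset$. This is exactly why the corollary is stated with ``$d+1$ or fewer'' rather than ``exactly $d+1$'': repeated choices produce subfamilies with fewer than $d+1$ elements, and these must also be covered.

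\emph{Applying the theorem.} The families are finite and consist of compact convex sets in $\mathbb{R}^m$, and $m>d\geq 1$ and $Y$ is $d$-dimensional. So Theorem~\ref{thm:convex version} applies to any continuous $f\colon\mathbb{R}^m\to\|Y\|$. It gives a point $y\in\|Y\|$ and an index $i$ such that $f^{-1}(y)$ meets every member of $F_i$. Since $F_i=F$, this is precisely the conclusion of the corollary.

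There is no real obstacle here. The only point needing care is the multiset issue in the colorful hypothesis, and the ``or fewer'' clause handles it.
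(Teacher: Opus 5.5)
Your proposal is correct and matches the paper, which derives the corollary by setting $F_1=\cdots=F_{d+1}=F$ in Theorem~\ref{thm:convex version}. Your check that repeated choices give at most $d+1$ distinct members, handled by the ``or fewer'' clause, is the only verification needed.
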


One of the common topological generalizations of Helly's theorem replaces the convex sets by a \emph{good cover}; a finite family of compact subsets of some topological space such that the intersection of any subfamily is either empty or contractible. Helly's theorem for good covers in $\mathbb{R}^d$ (discovered by Helly himself \cite{Helly1930}) is a simple consequence of Borsuk's nerve theorem \cite{Borsuk1948}, while the colorful Helly theorem for good covers in $\mathbb{R}^d$ is a substantially deeper result due to Kalai and Meshulam \cite{KM}. The following proposition implies that any Helly-type theorem for good covers is an immediate consequence of its corresponding overlap version.

\begin{prop}\label{prop:good covers}
Let $Y$ be a contractible $d$-dimensional simplicial complex and let $G  =\{S_1, \dots, S_n\}$ be a good cover in $Y$. Suppose $K$ is a simplicial complex on $n$ vertices which is a subcomplex of the nerve of $G$. Then there exists a family of convex sets $F = \{C_1, \dots, C_n\}$ in $\mathbb{R}^{m}$ (for some $m$) and a continuous map $f \colon \mathbb{R}^{m} \to \|Y\|$ such that the following hold.
\begin{enumerate}
\item \label{condition: nerve} The nerve of $F$ equals $K$.
\item \label{condition: containment} For every $i\in [n]$ we have $f(C_i)\subset S_i$.
\end{enumerate}
\end{prop}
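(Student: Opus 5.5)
The plan is to realize $K$ as the nerve of faces of a single simplex, then build $f$ on a barycentric subdivision of that simplex by induction on skeleta, using contractibility of the intersections of $G$, and finally extend to all of $\mathbb{R}^m$ by a retraction.

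\emph{The convex sets.} Let $\sigma_1,\dots,\sigma_N$ be the maximal faces of $K$; we may assume every $i\in[n]$ is a vertex of $K$. Take $m=N$ and affinely independent points $p_1,\dots,p_N\in\mathbb{R}^m$, and let $\Delta=\conv\{p_1,\dots,p_N\}$. Define
\[
C_i=\conv\{p_j : i\in\sigma_j\}.
\]
Each $C_i$ is a face of $\Delta$. For affinely independent points, the intersection of the convex hulls of two subsets is the convex hull of the intersection of the subsets. Hence for $\tau\subset[n]$ we get $\bigcap_{i\in\tau}C_i=\conv\{p_j:\tau\subset\sigma_j\}$. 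This is nonempty iff $\tau$ lies in some facet, i.e.\ iff $\tau\in K$, which proves condition (\ref{condition: nerve}).

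\emph{The map on $\Delta$.} For a nonempty $A\subset[N]$, write $\Delta_A=\conv\{p_j:j\in A\}$. Put $\tau(A)=\bigcap_{j\in A}\sigma_j$ and $S_{\tau(A)}=\bigcap_{i\in\tau(A)}S_i$, with the convention $S_\emptyset=\|Y\|$. Since $\tau(A)\in K$ and $K$ is a subcomplex of the nerve of $G$, the set $S_{\tau(A)}$ is nonempty. It is contractible because $G$ is a good cover, or because $Y$ is contractible when $\tau(A)=\emptyset$. The map is monotone: $A\subset A'$ implies $\tau(A)\supseteq\tau(A')$, hence $S_{\tau(A)}\subset S_{\tau(A')}$.

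We construct $f$ on $\bary\Delta$ skeleton by skeleton, maintaining the invariant that a simplex of $\bary\Delta$ whose chain is $A_0\subsetneq\cdots\subsetneq A_k$ is mapped into $S_{\tau(A_k)}$.
\begin{enumerate}
\item Send the barycenter of $\Delta_A$ to any point of $S_{\tau(A)}$.
\item Given a chain simplex with top $A_k$, each boundary face has top $A_j\subset A_k$. By the invariant and monotonicity, its image lies in $S_{\tau(A_j)}\subset S_{\tau(A_k)}$. So the boundary is mapped into the contractible space $S_{\tau(A_k)}$.
\item A map from a sphere into a contractible space is null-homotopic, so it extends over the ball. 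This extends $f$ over the simplex inside $S_{\tau(A_k)}$.
\end{enumerate}
Now $\Delta_A$ is the union of the chain simplices whose top is contained in $A$, so monotonicity gives $f(\Delta_A)\subset S_{\tau(A)}$. Taking $A=\{j:i\in\sigma_j\}$ gives $\Delta_A=C_i$ and $i\in\tau(A)$, so $f(C_i)\subset S_i$, which is condition (\ref{condition: containment}).

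\emph{Extension to $\mathbb{R}^m$.} Compose $f$ with the nearest-point retraction $\mathbb{R}^m\to\Delta$. This gives a continuous map $\mathbb{R}^m\to\|Y\|$ that agrees with $f$ on $\Delta\supseteq\bigcup C_i$, so both conditions still hold.

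The main point requiring care is the compatibility in step 2: the extension on a chain simplex must land in a set that already contains the images of all its faces. Indexing each simplex by the top of its chain, together with the reverse monotonicity of $\tau$, is exactly what makes this inductive extension consistent.
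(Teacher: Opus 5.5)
Your proof is correct. Its core matches the paper's: realize $K$ as the nerve of a family of simplices in a simplicial complex, then build $f$ by skeletal induction. A cell spanned by a collection of faces of $K$ goes into $\bigcap_{i\in\tau}S_i$, where $\tau$ is the common part of those faces. Since $\tau$ only grows on passing to subcells, boundaries land inside the contractible target. Your rule ``top of the chain $A_k$ goes to $S_{\tau(A_k)}$'' is exactly the paper's rule ``$\{\sigma_1,\dots,\sigma_{k+1}\}$ goes to $\bigcap_{i\in\sigma}S_i$ with $\sigma=\bigcap_j\sigma_j$.''

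The genuine differences are in the bookkeeping and, more importantly, in the final extension step.
\begin{itemize}
\item \emph{Realizing $K$.} The paper uses the conerve $\cN{K}$, whose vertices are all faces of $K$, and takes $C_v$ to be the simplex spanned by $\mathrm{st}(v)$. You index vertices only by the facets of $K$ and take the $C_i$ to be faces of a single simplex $\Delta$. The union of your $C_i$ is just the nerve of the facets of $K$, so this is a leaner version of the same idea.
\item \emph{Extending to $\mathbb{R}^m$.} The paper defines $f$ only on $\cN{K}$, which is not convex. It therefore needs that a contractible locally finite polyhedron $\|Y\|$ is an absolute extensor for metrizable spaces, which it takes from Hu's retract theory. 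You instead continue the induction over the faces of $\Delta$ not contained in any $C_i$, with target $\|Y\|$; this is where contractibility of $Y$ enters. You then finish with the nearest-point retraction onto the convex set $\Delta$.
\end{itemize}
Both routes use contractibility of $Y$ for the same purpose. Yours replaces the citation to ANR/AE theory with the elementary fact that maps from spheres into contractible spaces extend over balls, so it is more self-contained.

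Incidentally, the barycentric subdivision is unnecessary. The same induction works directly on the faces of $\Delta$: send $\Delta_A$ into $S_{\tau(A)}$, using that $\partial\Delta_A$ is the union of the $\Delta_{A'}$ with $A'\subsetneq A$ and that $S_{\tau(A')}\subset S_{\tau(A)}$.
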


Setting $K = [n_1]* \cdots * [n_{d+1}]$ in Proposition \ref{prop:good covers} and applying Theorem \ref{thm:convex version} gives us the colorful Helly theorem for good covers in a contractible $d$-dimensional simplicial complex. In fact, for the case of the colorful Helly theorem, we can further weaken the good cover condition. For a topological space $Y$ we say $\pi_k(Y)$ is \emph{trivial} if $Y$ is nonempty (for $k=-1$), $Y$ is path-connected (for $k=0$), or $\pi_k(Y)=0$ (for $k\geq 1$). The convention for $k=-1$ lets us treat nonemptiness uniformly with the higher connectivity conditions used below.

\begin{corollary}\label{cor:good covers}
Let $Y$ be a $d$-dimensional simplicial complex such that $\pi_d(Y)$ vanishes. Let $F_1$, $\dots$, $F_{d+1}$ be finite families of subcomplexes of $Y$ such that for every nonempty $I\subset [d+1]$ and for every choice $S_i\in F_i$ with $i\in I$, we have $\pi_{d-|I|}(\bigcap_{i\in I}S_i)$ is trivial. Then there is a point $y\in \|Y\|$ that belongs to every member of one of the families $F_i$.  \end{corollary}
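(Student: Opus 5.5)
We reduce Corollary \ref{cor:good covers} to Theorem \ref{thm:convex version}, in the spirit of Proposition \ref{prop:good covers}, taking the weaker connectivity hypotheses into account. Write $F_i=\{S^i_1,\dots,S^i_{n_i}\}$ and set $n=\sum n_i$. Let $K=[n_1]*\cdots*[n_{d+1}]$ be the join of the vertex sets, i.e.\ the complex whose simplices are the colorful sets containing at most one index of each color. Let $\Delta$ be the simplex on all $n$ vertices, realized in $\mathbb{R}^{m}$ with $m=n-1$ (enlarged if needed so that $m>d$). For each vertex $v$ let $C_v$ be the closed star of $v$ in the barycentric subdivision $\bary K$, i.e.\ the union of simplices of $\bary K$ whose vertices are barycenters of faces containing $v$, and take its convex hull if necessary. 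The cleaner choice is $C_v=\conv\{\text{barycenters of faces }\sigma\ni v,\ \sigma\in K\}$. Then a set of $C_v$'s has a common point exactly when the corresponding vertices span a face of $K$. In particular every colorful choice $C_1\in F'_1,\dots,C_{d+1}\in F'_{d+1}$ intersects, which is the hypothesis of Theorem \ref{thm:convex version}. Moreover $\bigcup_v C_v$ retracts onto (and essentially is) $\|\bary K\|$.

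Next we construct $f\colon \|\bary K\|\to \|Y\|$ skeleton by skeleton over $\bary K$, with the carrier condition that a vertex $\hat\sigma$ (the barycenter of a colorful face $\sigma$ with color set $I$) is sent into $\bigcap_{v\in\sigma}S_v$. A simplex of $\bary K$ corresponds to a chain $\sigma_0\subset\cdots\subset\sigma_k$, and we require it to map into $\bigcap_{v\in\sigma_0}S_v$, where $\sigma_0$ is the smallest face of the chain. Inductively, the boundary of such a $k$-simplex already lies in this intersection, since the faces' minimal elements contain $\sigma_0$. A chain of length $k$ starting at $\sigma_0$ with $|\sigma_0|=|I|$ has $k\le d+1-|I|$. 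So we need $\pi_{k-1}$ of the intersection to be trivial for $k-1\le d-|I|$. Triviality of $\pi_{d-|I|}$ alone does not suffice for this. Here the main obstacle appears, and we handle it by induction on the order in which the maps are defined. Using the lower intersections, we can instead process chains from the top: map the barycenter of a maximal face first. Alternatively, observe that only the top-dimensional extension needs $\pi_{d-|I|}$, and that for lower $k$ we may use the freedom of dimension. Since $\pi_j(Y)$ for $j<d$ may be nontrivial, the honest fix is to define $f$ only on the $(d+1)$-skeleton pattern and to argue using the pattern $k-1=d-|I|$ exactly. This works if we replace $\bary K$ by the "dual" cell structure, in which the cell dual to a face $\sigma$ with color set $I$ has dimension $d+1-|I|$ and boundary made of the dual cells of the supersets of $\sigma$. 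Extending over these cells in order of increasing dimension requires exactly $\pi_{d-|I|}(\bigcap_{v\in\sigma}S_v)=0$. The $|I|=d+1$ case is nonemptiness, i.e.\ $\pi_{-1}$.

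Finally, we extend $f$ from $\|\bary K\|$ to all of $\mathbb{R}^m$. The extension exists because $\pi_d(Y)=0$ and $\|Y\|$ is $d$-dimensional: the dual-cell complex is $(d+1)$-dimensional, and the remaining obstructions live in $\pi_j(Y)$. We expect this step to need care as well. We would rather extend over the whole simplex $\Delta$ after first retracting $\Delta$ minus small neighbourhoods onto the $(d+1)$-dimensional dual complex, so that only $\pi_d(Y)=0$ is needed. Then Theorem \ref{thm:convex version} gives $y$ and a color $i$ such that $f^{-1}(y)$ meets every $C_v$ with $v$ of color $i$. Since $f(C_v)\subset S_v$, the point $y$ lies in every $S^i_j$, as required.
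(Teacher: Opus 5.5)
Your reduction breaks at the last sentence: the inclusion $f(C_v)\subset S_v$ is never established. The sets $C_v=\conv\{\hat\sigma : v\in\sigma\in K\}$ are much larger than the stars $\mathrm{st}_{\bary K}(v)$ on which your dual-block construction controls $f$, and they leave $\|K\|$. For $d=1$ and $n_2\ge 2$, $C_v$ is an $n_2$-dimensional simplex, while $\mathrm{st}_{\bary K}(v)$ is a star of $n_2$ segments. So $\bigcup_v C_v$ is not ``essentially'' $\|\bary K\|$.

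Your construction gives only $f(\mathrm{st}_{\bary K}(v))\subset S_v$. Extending $f$ from $\|\bary K\|$ to $\mathbb{R}^m$ puts no constraint on $f(C_v\setminus\|K\|)$, so an arbitrary extension can leave $S_v$. Having some retraction $\bigcup_v C_v\to\|K\|$ is not enough. You need one with $r(C_v)\subset\mathrm{st}_{\bary K}(v)$ for every $v$, and you must then extend $f\circ r$, not $f$.

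Such an $r$ exists. On $C_v$, the other coordinates of $v$'s colour class vanish, and $x_v\ge\sum_{u\in V_j}x_u$ for every other colour class $V_j$. Replace each $x_u$ ($u\in V_j$) by $\max\bigl(0,\,x_u-\max_{u'\in V_j\setminus\{u\}}x_{u'}\bigr)$, reading the inner maximum as $0$ if $V_j=\{u\}$, and renormalise; on $C_v$ the sum is at least $x_v\ge 1/(d+1)$. At most one positive entry survives in each class, so the result lies in $\|K\|$. Moreover $v$ stays a maximal coordinate, which for points of $\|K\|$ is exactly membership in $\mathrm{st}_{\bary K}(v)$.

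Two smaller points also need fixing. First, the blocks dual to faces of the join are not cells. The block of $\sigma$ (colour set $I$) is the cone over $\bary(\mathrm{lk}\,\sigma)$, and $\mathrm{lk}\,\sigma$ is the join of the $[n_j]$, $j\notin I$, which is a wedge of $(d-|I|)$-spheres up to homotopy, not a sphere. Your count is still right, but for a different reason: this link is $(d-|I|)$-dimensional and $(d-|I|-1)$-connected. Hence $\pi_{d-|I|}(\bigcap_{v\in\sigma}S_v)=0$ suffices to make the boundary map null-homotopic, and you should say so.

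Second, the final extension has a clean argument in place of your sketch. $\|K\|\simeq\bigvee S^d$, so $\pi_d(Y)=0$ makes $f$, and hence $f\circ r$, null-homotopic. Borsuk's homotopy extension theorem ($\|Y\|$ is an ANR) then extends $f\circ r$ from the closed set $\bigcup_v C_v$ to $\mathbb{R}^m$.

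With these repairs your route through Theorem~\ref{thm:convex version} works, but it is a detour compared with the paper. The paper builds the map directly on the sliced cube $\cubical{n}{d+1}$, sending the $j$-th slice in direction $i$ into the $j$-th member of $F_i$, and then applies Theorem~\ref{t:top col helly}. There each $(k+1)$-cell lies in exactly $d-k$ slices and has a genuine $k$-sphere as boundary. So the skeletal extension uses precisely $\pi_{d-|I|}$, and only the top cells use $\pi_d(Y)=0$. No convex hulls, retractions, or extensions to $\mathbb{R}^m$ are needed.
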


Kalai and Meshulam's topological colorful Helly theorem is based on algebraic techniques, and as a payoff their methods extend to \emph{$d$-Leray complexes}, and allow for replacing the color classes by more general \emph{matroidal colorings}. (For further algebraic generalizations of the colorful Helly theorem, see also Fl{\o}ystad \cite{Floystad2011} and Kalai--Meshulam \cite{KalaiMeshulam2021}.) Our proof of Theorem \ref{thm:convex version} (and Corollary \ref{cor:good covers}) is more elementary than theirs, and is essentially based on Gale's classical HEX Lemma \cite{Gale1979}. It is unclear whether our methods can be extended to general matroidal colorings.

\subsection{}
To further discuss topological overlap theorems we need some notation and definitions. For a simplicial complex $X$, let $\faces{X}{k}$ denote the set of
$k$-dimensional faces. Let $Y$ be a topological space, and $\mathcal{C}=\mathcal{C}(X,Y)$ a class of functions from $X$ to $Y$.  For a given $0 < s \leq 1$, we say that $\mathcal{C}$ satisfies the \emph{$s$-overlap property in dimension $k$}, if for every $f \in \mathcal{C}$, there exists $y \in Y$, such that $f^{-1}(y)$ intersects at least an $s$-fraction of the $k$-faces of $X$; that is, \[|\faces{X}{k} \cap f^{-1}(y)| \geq s |\faces{X}{k}|.\]

This terminology allows us to formulate several known overlap theorems. For instance, B{\'a}r{\'a}ny's overlap theorem (also called the first selection lemma \cite[Theorem 9.1.1]{Matousek2002}) asserts that the class of facewise linear maps from the $N$-dimensional simplex, $\Delta^N$, to $\mathbb{R}^d$ satisfies the $(d+1)^{-d}$-overlap property in dimension $d$. This was subsequently improved by Gromov \cite{Gromov2010} who showed that the class of continuous maps from $\Delta^N$ to $\mathbb{R}^d$ satisfies the $\frac{2^d}{(d+1)(d+1)!}$-overlap property in dimension $d$, a better bound than B{\'a}r{\'a}ny's. (We will discuss more overlap theorems later on.) Another result in this direction is 
Karasev's topological generalization of the centerpoint theorem \cite{roman-central}.

\begin{theorem*}[Topological centerpoint theorem]
Let $Y$ be a $d$-dimensional metric space and let $X=\Delta^N$ denote the simplex of dimension $N = (d+1)(k-1)$. The space of continuous functions from $X$ to $Y$ satisfies the $1$-overlap property in dimension $d(k-1)$.
\end{theorem*}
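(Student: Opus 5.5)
The plan is to derive the topological centerpoint theorem from Corollary~\ref{cor:just helly}, applied to the family of all $d(k-1)$-dimensional faces of a geometric simplex. A compactness argument then passes from simplicial complexes to general $d$-dimensional metric spaces. We may assume $k\geq 2$, since for $k=1$ the statement is trivial. Realize $X=\Delta^N$ as a geometric simplex in $\mathbb{R}^N$ with $N=(d+1)(k-1)$. Let $F$ be the family of its $d(k-1)$-dimensional faces; these are compact convex sets. Each such face has $d(k-1)+1$ vertices, so it misses exactly $(N+1)-(d(k-1)+1)=k-1$ vertices of $\Delta^N$. Any $d+1$ members of $F$ therefore miss together at most $(d+1)(k-1)=N<N+1$ vertices. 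Hence they share a vertex, and every $d+1$ or fewer members of $F$ intersect. Note also that $m=N=(d+1)(k-1)>d$, as Corollary~\ref{cor:just helly} requires.

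First I treat the case where the target is $\|Y\|$ for a $d$-dimensional simplicial complex $Y$. Given a continuous $f\colon \Delta^N\to\|Y\|$, extend it to $\tilde f=f\circ r\colon\mathbb{R}^N\to\|Y\|$, where $r$ is the nearest-point retraction onto the convex set $\Delta^N$. Corollary~\ref{cor:just helly} gives a point $y$ such that $\tilde f^{-1}(y)$ meets every member of $F$. Since every member of $F$ lies in $\Delta^N$, where $\tilde f=f$, the fiber $f^{-1}(y)$ meets every $d(k-1)$-face. This is the $1$-overlap property.

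For a general $d$-dimensional metric space $Y$, the image $f(\Delta^N)$ is compact and has covering dimension at most $d$. For each $\varepsilon>0$, choose a finite open cover $\mathcal U_\varepsilon$ of $f(\Delta^N)$ by sets of diameter less than $\varepsilon$ and of order at most $d+1$. Let $g_\varepsilon\colon f(\Delta^N)\to\|N_\varepsilon\|$ be the canonical partition-of-unity map to the nerve $N_\varepsilon$, which is a simplicial complex of dimension at most $d$. Applying the simplicial case to $g_\varepsilon\circ f$ gives $z_\varepsilon$ and, for each face $\sigma\in F$, a point $x^\varepsilon_\sigma\in\sigma$ with $g_\varepsilon(f(x^\varepsilon_\sigma))=z_\varepsilon$. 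Let $v$ be any vertex of the carrier simplex of $z_\varepsilon$. All the points $f(x^\varepsilon_\sigma)$ have positive coordinate at $v$, so they all lie in the set $U_v$. Hence they are pairwise within distance $\varepsilon$. Let $\varepsilon\to0$ and pass to a subsequence along which every $x^\varepsilon_\sigma$ converges; there are finitely many faces and each is compact. The limits $x_\sigma\in\sigma$ then satisfy $f(x_\sigma)=y$ for a common point $y$, which proves the theorem.

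The Helly-type combinatorics and the extension to $\mathbb{R}^N$ are routine. I expect the main obstacle to be this final approximation step. One has to make sure that "$d$-dimensional metric space" provides arbitrarily fine covers of order $d+1$, at least on the compact image. One also has to check that points with equal nerve image are genuinely $\varepsilon$-close. The carrier-vertex argument above is what I would use for the latter.
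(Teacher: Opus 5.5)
This is the paper's own derivation: it applies Corollary~\ref{cor:just helly} to the family of all $d(k-1)$-dimensional faces of $\Delta^{(d+1)(k-1)}$, and you supply the vertex count and the extension to $\mathbb{R}^N$ that the paper leaves implicit. The paper only claims the triangulable case, so your nerve-approximation step (fine covers of order at most $d+1$ of the compact image $f(\Delta^N)$, the carrier-vertex argument, and a limit over the finitely many faces) is a correct addition that extends the result to arbitrary metric spaces of covering dimension $d$; if some nerve has dimension below $d$, adjoin a disjoint $d$-simplex before invoking the corollary.
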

Notice that $d(k-1)=\frac{d}{d+1} N$, so restricted to the class of facewise linear maps into $\mathbb{R}^d$, this reduces to the centerpoint theorem \cite{Rado1946}. Note also that Karasev's theorem, in the case where $Y$ is triangulable, follows from Corollary \ref{cor:just helly} by setting $F$ to be the family consisting of all $d(k-1)$-dimensional faces of $\Delta^{(d+1)(k-1)}$. (Our proof is arguably more elementary than Karasev's proof.)

Next, we formulate a special instance of Theorem \ref{thm:convex version} which is in fact equivalent to it, but which we find to be interesting in its own right. 
Denote by $[1,n]$ the interval of real numbers between $1$ and $n$ (not to be confused with $[n]$ which denotes the integers $\{1, \dots, n\}$). Thus $[1,n]^{d+1}$ is a $(d+1)$-dimensional cube of side length $n-1$. Given $i \in [d+1]$ and $j \in [n]$ define the $j$-th \emph{slice} in direction $i$ to be the subset
\[ \slice{i}{j} = \left\{ \: (x_1, \dots, x_{d+1}) \in [1,n]^{d+1}  :  x_i = j \:\right\}. \]

For integers $n \geq 2$ and $d \geq 1$, let $\cubical{n}{d+1}$ denote the $d$-skeleton of the standard cubical subdivision of the $(d+1)$-dimensional cube of side length $(n-1)$, i.e.,
\[ \cubical{n}{d+1} = \bigcup_{j\in [n],\, i\in [d+1]}\slice{i}{j}. \] 

The families $F_i = \{\slice{i}{j}  :  j \in [n]\}$, $1\leq i \leq d+1$, serve as a standard example illustrating the optimality of the number of families in the classical colorful Helly theorem (in dimension $d+1$). Indeed, every intersection $\slice{1}{j_1}\cap \cdots \cap \slice{d+1}{j_{d+1}}$ is nonempty, yet the members of each $F_i$ are pairwise disjoint. The following theorem is an immediate consequence of Theorem \ref{thm:convex version}.

\begin{theorem}\label{t:top col helly}
Let $Y$ be a $d$-dimensional simplicial complex. For every continuous map  $f \colon \cubical{n}{d+1} \to \|Y\|$ there exists an $i \in [d+1]$ and a point $y\in \|Y\|$ such that $f^{-1}(y)$ intersects $\slice{i}{j}$ for every $1\leq j\leq n$.
\end{theorem}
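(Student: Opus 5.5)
We derive Theorem \ref{t:top col helly} from Theorem \ref{thm:convex version} applied with $m = d+1 > d$. Take the families $F_i = \{\slice{i}{j} : j\in[n]\}$ for $i\in[d+1]$, regarded as compact convex sets in $\mathbb{R}^{d+1}$; each slice is a $d$-dimensional face-parallel section of the cube $[1,n]^{d+1}$, hence compact and convex. For any choice $j_1,\dots,j_{d+1}\in[n]$ the intersection $\slice{1}{j_1}\cap\cdots\cap\slice{d+1}{j_{d+1}}$ contains the point $(j_1,\dots,j_{d+1})$, so the colorful hypothesis of Theorem \ref{thm:convex version} holds.

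The only point needing care is that Theorem \ref{thm:convex version} requires a map defined on all of $\mathbb{R}^{d+1}$, whereas $f$ is only given on $\cubical{n}{d+1}$. Since $\cubical{n}{d+1}$ is a closed subset of $\mathbb{R}^{d+1}$, we first retract onto it. We would like a continuous retraction $r\colon \mathbb{R}^{d+1}\to\cubical{n}{d+1}$, but none exists, because $\cubical{n}{d+1}$ has nontrivial $d$-dimensional homology. We do not need one. The union of the slices is exactly $\cubical{n}{d+1}$, so all that matters is a map $g\colon\mathbb{R}^{d+1}\to\|Y\|$ agreeing with $f$ on $\cubical{n}{d+1}$.

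To construct $g$, extend $f$ cell by cell. Outside the cube, compose $f$ with the nearest-point retraction of $\mathbb{R}^{d+1}$ onto $[1,n]^{d+1}$, once $f$ has been extended over the cube. Inside the cube, the complement of $\cubical{n}{d+1}$ is a union of open unit $(d+1)$-cells $c$, each with boundary $\partial c\cong S^d$ contained in $\cubical{n}{d+1}$. Extending $f|_{\partial c}$ over $c$ needs $f|_{\partial c}$ to be null-homotopic, which may fail when $\pi_d(Y)\neq 0$. Instead we extend through a small collar. Choose a point $p_c$ in the interior of each cell $c$. Then map $c\setminus\{p_c\}$ by radial projection onto $\partial c$ followed by $f$, and send $p_c$ to an arbitrary point. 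This is discontinuous at $p_c$.

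To avoid the discontinuity, the cleaner route is to note that the proof of Theorem \ref{thm:convex version} only uses $f$ on $\bigcup_{i,j}\slice{i}{j}$. Alternatively, shrink each slice slightly: replace $\slice{i}{j}$ by itself. Composing with radial projections gives a continuous map on $\mathbb{R}^{d+1}$ minus the finite set $P=\{p_c\}$, and $P$ avoids every slice. We then apply Theorem \ref{thm:convex version} in $\mathbb{R}^{d+2}$ with $m=d+2$. Embed $\mathbb{R}^{d+1}\setminus P$ into $\mathbb{R}^{d+2}$ and use the map $(x,t)\mapsto$ a retraction of $\mathbb{R}^{d+2}$ minus vertical lines through $P$ onto $\mathbb{R}^{d+1}\setminus P$. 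This still fails to be defined everywhere.

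So the honest main obstacle is the extension of $f$ from $\cubical{n}{d+1}$ to the whole ambient space. I expect the intended resolution to be that the proof of Theorem \ref{thm:convex version} reduces to maps defined on the union of the sets in $\bigcup_i F_i$, or more precisely on a nerve-type complex built from them. If so, the statement follows verbatim: apply that version with the slices and conclude that some $y$ and some $i$ satisfy $f^{-1}(y)\cap\slice{i}{j}\neq\emptyset$ for all $j\in[n]$.
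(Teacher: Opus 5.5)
\textbf{Verdict: there is a genuine gap.} Your proposal never reaches a proof. Each extension you try fails, as you observe. ``Replace $\slice{i}{j}$ by itself'' does nothing, the map on $\mathbb{R}^{d+2}$ is never defined, and the final paragraph is a hope rather than an argument.

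\textbf{The hoped-for resolution is circular.} The paper deduces Theorem~\ref{thm:convex version} \emph{from} this theorem. It builds $g\colon \cubical{n}{d+1}\to\mathbb{R}^m$ with $g(\slice{i}{j})\subset C_{i,j}$ and applies Theorem~\ref{t:top col helly} to $f\circ g$. For $C_{i,j}=\slice{i}{j}$ you may take $g$ to be the inclusion. So ``Theorem~\ref{thm:convex version} for maps defined only on the union of the sets'' is exactly the statement you are trying to prove.

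\textbf{The missing ingredient is the HEX Lemma}, used through Lemma~\ref{l:simplicial}. The paper proceeds as follows.
\begin{enumerate}
\item Approximate $f$ by a simplicial map into $\text{sd}^m(L_Y)$, which is balanced.
\item Pull back a proper $(d+1)$-coloring to a labeling of the vertices of a triangulation of the cube.
\item HEX gives a monochromatic path joining opposite facets $x_i=1$ and $x_i=n$.
\item A monochromatic edge cannot map onto an edge of a balanced complex, so the approximation is constant on this path.
\item The path meets every $\slice{i}{j}$, and compactness transfers the conclusion back to $f$.
\end{enumerate}

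\textbf{The obstruction you isolated is real.} With $\cubical{n}{d+1}$ read literally as the $d$-skeleton, the statement is false. Take $n=2$, so $\cubical{2}{d+1}=\partial[1,2]^{d+1}\cong S^d$. Let $Y=\partial\Delta^{d+1}$ and let $f$ be a homeomorphism. Every fiber is a single point, and no point lies in both of the disjoint facets $\slice{i}{1}$ and $\slice{i}{2}$.

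For larger $n$, make $f$ injective on $\partial[1,n]^{d+1}$ and extend it over the interior cells. These have dimension at most $d$, and the extension exists because $\pi_{k-1}(S^d)=0$ for $k\le d$. Then $\slice{i}{1}$ and $\slice{i}{n}$ have disjoint images for every $i$.

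\textbf{What the paper actually proves is the solid-cube version}, with $f$ defined on $[1,n]^{d+1}$. This is how the paper implicitly treats $\cubical{n}{d+1}$: it describes the maximal cells as unit $(d+1)$-cubes, and Lemma~\ref{l:simplicial} requires a triangulation of the full cube. Equivalently, the argument covers maps on the skeleton that extend over every unit $(d+1)$-cell, which is automatic when $\pi_d(Y)=0$.

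In that setting your reduction is valid. Composing with the nearest-point retraction onto $[1,n]^{d+1}$ extends $f$ to $\mathbb{R}^{d+1}$, matching the paper's remark that this theorem is a special case of Theorem~\ref{thm:convex version}. It still presupposes Theorem~\ref{thm:convex version}, though, so a self-contained proof needs the HEX argument above.
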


We will prove Theorem \ref{t:top col helly} using a suitable version of Gale's HEX Lemma \cite{Gale1979} in Section \ref{sec:sliced-cube}, where we also deduce Theorem \ref{thm:convex version}.

As another immediate consequence, we obtain a waist-type theorem (see \cite{roman-central, waists19}). For a number $s \in [0,1]$, we abuse notation and denote the $s$-th slice in direction $i$ of the full cube $[0,1]^{d+1}$  by \[\slice{i}{s}=\{(x_1,x_2, \ldots x_{d+1}) \in [0,1]^{d+1}: x_i=s\}.\]

\begin{corollary} \label{c:waist}
Let $Y$ be a $d$-dimensional simplicial complex. For every continuous map $f \colon [0,1]^{d+1} \to \|Y\|$ there exists an index $i \in [d+1]$ and a point $y\in \|Y\|$ such that $f^{-1}(y) \cap \slice{i}{s} \neq \emptyset$ for every $s\in [0,1]$. 
\end{corollary}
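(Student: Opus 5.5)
We derive Corollary \ref{c:waist} from Theorem \ref{t:top col helly} by a discretization and compactness argument. For each integer $n\geq 2$, identify $[1,n]^{d+1}$ with $[0,1]^{d+1}$ via the affine map $x\mapsto (x-\mathbf{1})/(n-1)$. Under this identification the slice $\slice{i}{j}$ of $[1,n]^{d+1}$ corresponds to the slice $\slice{i}{s}$ of the unit cube with $s=(j-1)/(n-1)$. Restrict $f$ to the image of $\cubical{n}{d+1}$, which is the union of these grid slices. Theorem \ref{t:top col helly} then yields an index $i_n\in[d+1]$ and a point $y_n\in\|Y\|$ with the following property: for every $s\in\{0,\tfrac{1}{n-1},\dots,1\}$ there is a point $x_{n,s}\in \slice{i_n}{s}$ with $f(x_{n,s})=y_n$.

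Next we pass to the limit. Since there are finitely many indices, we may pass to a subsequence with $i_n=i$ constant. The points $y_n$ lie in $f([0,1]^{d+1})$, which is compact because $f$ is continuous. So after passing to a further subsequence, $y_n\to y$. Compactness here is the one point needing care: $\|Y\|$ is only locally finite, but the image of the compact cube is compact, so this is not an issue.

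Now fix $s\in[0,1]$. For each $n$ in the subsequence, choose a grid value $s_n\in\{0,\tfrac1{n-1},\dots,1\}$ with $|s_n-s|\leq \tfrac1{n-1}$, and let $x_n=x_{n,s_n}\in\slice{i}{s_n}$ with $f(x_n)=y_n$. By compactness of the cube, a subsequence of $x_n$ converges to some $x$. The $i$-th coordinate of $x$ is $\lim s_n=s$, so $x\in\slice{i}{s}$. By continuity, $f(x)=\lim y_n=y$. Hence $f^{-1}(y)\cap\slice{i}{s}\neq\emptyset$.

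The index $i$ and the point $y$ were fixed before $s$ was chosen, so this single pair $(i,y)$ works for every $s\in[0,1]$, which proves the corollary. The main step to handle carefully is exactly this order of quantifiers: choose $i$ and $y$ from the sequence first, independently of $s$, and only then extract the subsequence of points $x_n$ for each $s$ separately. This is legitimate because the subsequence along which $y_n\to y$ does not depend on $s$.
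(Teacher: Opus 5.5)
Your proof is correct and follows the paper's route: discretize, apply Theorem~\ref{t:top col helly} to obtain pairs $(i_n,y_n)$, pass to a subsequence with $i_n=i$ constant and $y_n\to y$, then take limits. The only difference is the last step. You extract convergent points $x_n\in\slice{i}{s_n}$ with $f(x_n)=y_n$ separately for each $s$, whereas the paper takes a Hausdorff limit $X$ of the fibers $f^{-1}(y_n)$. Your version is more elementary and uses only the inclusion $X\subset f^{-1}(y)$, which is all that is needed. The paper's further claim $f^{-1}(y)\subset X$ is unnecessary and false in general: take $f(x)=\max(0,x_1-\tfrac12)$ and $y_n=1/n$.
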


\subsection{}
We believe that essentially every familiar Helly-type theorem admits an overlap generalization. One goal we have not been able to achieve by our methods is the overlap version of the Alon--Kleitman $(p,q)$ theorem \cite{AlonKleitman1992}. 
We say that a family of sets has the \emph{$(p,q)$ property} if every $p$ members of the family contain a subfamily of size $q$ with nonempty intersection.

\begin{problem}\label{con:pq}
For all integers $p\geq q > d$, does there exist an integer $t = t(p,q,d)$ such that the following holds? 

If $F$ is a finite family of convex sets in $\mathbb{R}^n$ with the $(p,q)$ property, then for any continuous map $f: \mathbb{R}^n \to \mathbb{R}^d$ there exists $t$ points $y_1$, $\dots$, $y_t \in \mathbb{R}^d$ such that $\inv{f}(y_1) \cup \cdots \cup \inv{f}(y_t)$ intersects every member of $F$.  
\end{problem}

Just as in Theorem \ref{thm:convex version}, it also makes sense to replace $\mathbb{R}^d$ by any $d$-dimensional simplicial complex $Y$. The same goes for the next two questions as well, even though we only state them for the case $Y = \mathbb{R}^d$.

One of the most important ingredients of Alon and Kleitman's proof of the $(p,q)$ theorem is the \emph{fractional Helly theorem} of Katchalski and Liu \cite{KatchalskiLiu1979} (see also \cite{akmm, holmsen-lee}). We conjecture that the fractional Helly theorem also admits an overlap version. 

\begin{problem}\label{con:FracHelly}
For every integer $d\geq 1$ and $\alpha \in (0,1]$, does there exist a constant $\beta = \beta(\alpha, d) >0$ such that the following holds? 

If $F$ is a finite family of convex sets in $\mathbb{R}^n$ such that at least $\alpha \binom{|F|}{d+1}$ of the $(d+1)$-membered subfamilies have nonempty intersection, then for any continuous map $f \colon \mathbb{R}^n \to \mathbb{R}^d$ there exists a point $y\in \mathbb{R}^d$ such that  $\inv{f}(y)$ intersects at least $\beta|F|$ members of $F$.
\end{problem}

An important result from discrete geometry is the weak $\varepsilon$-net theorem for convex sets, discovered for convex sets in the plane by B{\'a}r{\'a}ny--F{\"u}redi--Lov{\'a}sz \cite{bfl}, and in higher dimensions by Alon--B{\'a}r{\'a}ny--F{\"u}redi--Kleitman \cite{abfk}. Here is the overlap version of the weak $\varepsilon$-net theorem.

\begin{prop}\label{prop:weak epsilon}
For every integer $d\geq 1$ and $\varepsilon \in (0,1)$ there exists an integer $p = p(\varepsilon, d)$ such that the following holds. For every continuous map $f : \Delta^N \to \mathbb{R}^d$ there exists $p$ points $y_1$, $\dots$, $y_p \in \mathbb{R}^d$ such that $\inv{f}(y_1) \cup \cdots \cup \inv{f}(y_p)$ intersects every face of $\Delta^N$ of dimension $\lceil \epsilon N \rceil$.
\end{prop}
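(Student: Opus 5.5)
The plan is to build the net by slicing along one coordinate and then running an overlap (centerpoint-type) argument inside each slice, using Corollary~\ref{cor:just helly} in place of the classical centerpoint theorem. The organizing observation is that the family of $\lceil\varepsilon N\rceil$-faces of $\Delta^N$ has the $(p,d+1)$ property with $p=\lceil d/\varepsilon\rceil+1$: among $p$ such faces there are more than $dN$ vertex incidences, so some vertex lies in $d+1$ of them. The statement is therefore a special case of Problem~\ref{con:pq}. The fractional Helly step is trivial here: for any probability measure on the faces, averaging the incidences gives a vertex $v$ lying in faces of mass at least about $\varepsilon$, so $f(v)$ hits that mass. By LP duality the fractional transversal number is at most about $1/\varepsilon$. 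However, the Alon--Kleitman route then needs exactly a weak net, which is what we are proving, so I will instead argue directly and avoid that circularity.

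\emph{Base case $d=1$.} Order the vertices $v_1,\dots,v_{N+1}$ by the value of $f$ and take the quantile points $y_j=f(v_{\lfloor j\varepsilon N/2\rfloor})$ for $j=1,\dots,\lceil 2/\varepsilon\rceil$. A face $\sigma$ with $\lceil\varepsilon N\rceil+1$ vertices has one vertex at or below some $y_j$ and another at or above it. Since $f(\sigma)$ is connected, it contains $y_j$. Hence $p(\varepsilon,1)\le 2/\varepsilon+1$.

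\emph{General $d$.} Let $f=(f_1,g)$ with $g\colon\Delta^N\to\mathbb{R}^{d-1}$, and cut $\mathbb{R}$ by $f_1$-quantiles $t_1<\dots<t_M$ as above, with $M=O(1/\varepsilon)$. Every face $\sigma$ then splits as a join $\sigma=A*B$, where $A$ and $B$ have about $\varepsilon N/2$ vertices each, all vertices of $A$ have $f_1\le t_j$, and all vertices of $B$ have $f_1\ge t_j$. The level set $\sigma\cap f_1^{-1}(t_j)$ meets every segment $[a,b]$ with $a\in A$ and $b\in B$, and in fact it contains a section over $A\times B$. For each fixed $j$, let $A_j$ and $B_j$ be the full sets of vertices below and above $t_j$. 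I want to apply induction (or Corollary~\ref{cor:just helly}, with target $\mathbb{R}^{d-1}$) to $g$ restricted to a space that parametrizes these level sets, for instance $g\circ s$ where $s$ is a section $\|\Delta^{A_j}\times\Delta^{B_j}\|\to f_1^{-1}(t_j)$. That space is a product of simplices, and its relevant subsets $\Delta^{A}\times\Delta^{B}$ are convex, so Theorem~\ref{thm:convex version} and Corollary~\ref{cor:just helly} still apply there. The induction hypothesis would then be stated for products of simplices and ``large'' subfaces. The outcome would be $p(\varepsilon,d)\le M\cdot p(\varepsilon',d-1)$ for some $\varepsilon'$ depending on $\varepsilon$ and $d$.

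The main obstacle is this slicing step. The level set $f_1^{-1}(t_j)\cap\sigma$ is not a face, and one must produce a continuous parametrization (a section) that is compatible simultaneously for all subfaces $A*B$, so that the faces to be hit in the $(d-1)$-dimensional problem are again products of large faces. My first attempt will use the straight-line parametrization on each segment $[a,b]$. Where $f_1$ is not monotone along segments, I will perturb $f$ to be generic (simplicial) and use the compactness of $\Delta^N$ to pass to the limit. If this fails, the fallback is to apply Corollary~\ref{cor:just helly} directly to $g$ on the whole slab $f_1^{-1}([t_{j-1},t_{j+1}])$, using faces with more than a $d/(d+1)$ fraction of their vertices inside the slab. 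I would handle faces that spread over many slabs by a further pigeonhole over the $O(1/\varepsilon)$ slabs.
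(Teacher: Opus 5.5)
\paragraph{Verdict.} Your base case $d=1$ is correct. The inductive step for $d\ge 2$ has a genuine gap, at exactly the point you flag.

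\paragraph{Why the section does not exist.} The slicing scheme you are imitating is the convex-set proof of weak $\varepsilon$-nets by cutting with hyperplanes. It relies on a hyperplane section of a convex set being convex, so that it contains the convex hull of the crossing points $[a,b]\cap H$. There is no substitute for this when the slice is $\sigma\cap f_1^{-1}(t_j)$.
\begin{itemize}
\item The straight-line parametrization already fails. Only the \emph{vertices} of $\Delta^{A}$ satisfy $f_1\le t_j$; interior points of $\Delta^A$ may have $f_1>t_j$, so a segment from $\Delta^A$ to $\Delta^B$ need not cross the level at all.
\item More seriously, the compatible section $s$ need not exist, even for generic PL maps. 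Take $a\in A_j$ and $b,b'\in B_j$, and suppose $f_1<t_j$ on the segment $[a,m]$ from $a$ to the midpoint $m$ of $[b,b']$. Inside the triangle $\Delta^{\{a,b,b'\}}$, the segment $[a,m]$ separates $[a,b]$ from $[a,b']$. Hence no path in $f_1^{-1}(t_j)\cap\Delta^{\{a,b,b'\}}$ joins a point of $[a,b]$ to a point of $[a,b']$. But $s$ must map the edge $\{a\}\times[b,b']$ into that set, with endpoints on those two segments, so $s$ cannot be defined there.
\item Requiring compatibility only for large $A,B$ changes nothing, once $A_j,B_j$ exceed your size threshold. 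The reason is that $\Delta^{\{a,b,b'\}}$ is the intersection of all faces $\Delta^{A\cup B}$ with $A\supseteq\{a\}$ and $B\supseteq\{b,b'\}$ large.
\item The condition ``$f_1<t_j$ on $[a,m]$'' is open, so perturbing to a generic map and passing to a limit does not help.
\item The same kind of low-$f_1$ ``wall'' shows that $g(\sigma\cap f_1^{-1}(t_j))$ need not be connected. So even for $d=2$ you cannot rerun your one-dimensional quantile argument on the line $x_1=t_j$.
\end{itemize}

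\paragraph{The fallback does not close the gap.} Corollary~\ref{cor:just helly} yields a single point for a family in which every $d+1$ members meet. Inside a slab, this covers only faces having more than a $\frac{d}{d+1}$-fraction of their vertices there. Faces split between two distant slabs (say half and half) are precisely the hard case, and the ``further pigeonhole'' just returns you to the level-set problem above.

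\paragraph{The missing ingredient.} What you need is a quantitative overlap theorem. The paper takes it from Gromov: continuous maps $\Delta^r\to\mathbb{R}^d$ have the $s$-overlap property in dimension $d$ with $s=s(d)>0$. With this, no slicing is needed, only a greedy argument.
\begin{itemize}
\item While some $\lceil\varepsilon N\rceil$-face $\sigma_i$ is missed by $f^{-1}(y_1)\cup\dots\cup f^{-1}(y_{i-1})$, apply Gromov's theorem to $f|_{\sigma_i}$.
\item This gives a point $y_i$ whose fiber meets roughly an $s\varepsilon^{d+1}$-fraction of all $d$-faces of $\Delta^N$.
\item All these $d$-faces lie in $\sigma_i$, so none of them was met by an earlier fiber.
\item Hence the process stops after $O(s^{-1}\varepsilon^{-(d+1)})$ steps.
\end{itemize}
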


The proof of Proposition \ref{prop:weak epsilon} is a straight-forward adaptation of the proof which relies on B{\'a}r{\'a}ny's selection lemma (see e.g. \cite[Theorem 9.2.1]{Matousek2002}), but using Gromov's generalization instead. This is given in Section~\ref{sec:fractional}. Observe that if $K$ is a simplicial complex on $n$ vertices, then the class of continuous functions from $K$ to $\mathbb{R}^d$ satisfy the $s$-overlap property in dimension $\lceil \varepsilon n \rceil$ for $s = \frac{1}{p}$ where $p$ is the value from Proposition \ref{prop:weak epsilon}. Indeed, given a map from $K$ to $\mathbb{R}^d$ extend it arbitrarily to the entire simplex on the vertex set of $K$ and apply Proposition \ref{prop:weak epsilon}. By the pigeon-hole principle one of the points $y_i$ intersects at least a $\frac{1}{p}$-fraction of the $\lceil \varepsilon n \rceil$-faces of $K$.

The so-called second selection lemma \cite{bfl, abfk} (see also \cite[Theorem 9.2.1]{Matousek2002}) asserts that the class of facewise linear functions from a simplicial complex $K$ to $\mathbb{R}^d$ satisfies the $s$-overlap property in dimension $d$, where $s$ depends only on $d$ and the density of the $d$-skeleton of $K$. One possible approach to attacking Problem \ref{con:FracHelly} would be to extend the second selection lemma to all continuous maps. (Note that Pach's overlap theorem is false in the case of $2$-tame maps for any manifold and for $1$-tame maps on the sphere \cite{BaranyMeshulamNevoTancer2018, BukhHubard2020}.)

\begin{problem}\label{con:top2ndsel}
For every $\alpha\in (0,1]$ and integer $d\geq 1$, does there exist a constant $s = s(\alpha, d)$ such that the following holds? 

If $K$ is a simplicial complex on $n$ vertices with at least $\alpha \binom{n}{d+1}$ faces of dimension $d$, then the class of continuous functions from $K$ to $\mathbb{R}^d$ satisfies the $s$-overlap property in dimension $d$.
\end{problem}

In Section \ref{sec:fractional} we show that an affirmative answer to Problem \ref{con:top2ndsel} implies an affirmative answer to Problem \ref{con:FracHelly}. In particular we prove the following.

\begin{prop}\label{prop:2ndsel implies fractional}
Let $K$ be a simplicial complex and suppose that the class of continuous functions from $K$ to $\mathbb{R}^d$ satisfies the $s$-overlap property in dimension $d$. Then, an overlap fractional Helly theorem holds for any family $F$ of convex sets in $\mathbb{R}^m$ whose nerve is isomorphic to $K$. More specifically, for any continuous function $f: \mathbb{R}^m \to \mathbb{R}^d$ there exists a point $y\in \mathbb{R}^d$ such that $f^{-1}(y)$ intersects at least $\beta\cdot |F|$ members of $F$, where $\beta\geq \frac{s}{d+1}$.
\end{prop}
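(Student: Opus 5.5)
We construct a map from the nerve $K$ into $\mathbb{R}^d$ by selecting points in the convex sets and composing with $f$, then invoke the $s$-overlap property of $K$ to obtain a point $y$ hit by many $d$-faces. Write $F=\{C_1,\dots,C_n\}$ and identify the vertex set of $K$ with $[n]$, so that $\sigma\subset[n]$ is a face of $K$ exactly when $\bigcap_{i\in\sigma}C_i\neq\emptyset$.

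\textbf{Building the map.} For every nonempty face $\sigma\in K$ choose a point $p_\sigma\in\bigcap_{i\in\sigma}C_i$. Define $g\colon \|\bary K\|\to\mathbb{R}^m$ on the barycentric subdivision by sending the vertex $\sigma$ of $\bary K$ to $p_\sigma$, and extend linearly on each simplex $\sigma_0\subsetneq\sigma_1\subsetneq\cdots\subsetneq\sigma_k$. Each vertex $p_{\sigma_j}$ of such a chain lies in $C_i$ for every $i\in\sigma_0$. Convexity then gives $g(\|\sigma\|)\subset\bigcup_{i\in\sigma}C_i$ for each face $\sigma$ of $K$. More precisely, every point of $\|\sigma\|$ lies in the relative interior of a simplex of $\bary\sigma$ whose minimal vertex $\sigma_0\subset\sigma$, and that point is mapped into $C_i$ for every $i\in\sigma_0$. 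Since $\|\bary K\|=\|K\|$, the composition $h=f\circ g\colon\|K\|\to\mathbb{R}^d$ is a continuous function on $K$.

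\textbf{Applying the overlap property.} By hypothesis there is $y\in\mathbb{R}^d$ such that $h^{-1}(y)$ meets at least $s|\faces{K}{d}|$ of the $d$-faces of $K$. Let $\tau$ be such a face and $x\in\|\tau\|$ with $h(x)=y$. By the previous paragraph, $g(x)\in C_i$ for some $i\in\tau$, so $f^{-1}(y)$ meets $C_i$. This defines a charging map from the hit $d$-faces to members $C_i$ with $i\in\tau$ and $C_i\cap f^{-1}(y)\neq\emptyset$. Let $I$ be the set of charged indices.

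\textbf{Counting.} This is where the bound $\beta\geq s/(d+1)$ must come from, and it is the main obstacle. The plan is to bound the number of hit $d$-faces by $|I|$ times the maximum number of $d$-faces charged to one vertex. The naive bound on that maximum is $\binom{n-1}{d}$, which yields
\[
s|\faces{K}{d}|\le |I|\binom{n-1}{d}=|I|\frac{d+1}{n}\binom{n}{d+1}.
\]
This gives $|I|\geq \frac{s}{d+1}\,n\,|\faces{K}{d}|/\binom{n}{d+1}$, which is weaker than claimed unless $K$ is dense. I would remove the density loss by being smarter: add the whole simplex on $[n]$ as the target complex, or better, apply the overlap property to $K$ so that only faces of $K$ are counted, and charge each $d$-face of $K$ to a vertex whose link in $K$ is small. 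If $K$ is not full this may still cost a density factor, and I would then interpret $\beta\geq s/(d+1)$ relative to the normalization $\alpha\binom{n}{d+1}\le|\faces{K}{d}|$ used in the fractional setting. Pinning down the exact normalization the authors intend is the delicate point; the topological part is the barycentric construction above.
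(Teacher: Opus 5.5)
Your map $g$ and its containment property are exactly the paper's Lemma~\ref{lem:K in F}, and your charging count, in which each vertex lies in at most $\binom{n-1}{d}$ hit $d$-faces, is exactly the paper's count. You diverge only in the normalization, and there your bookkeeping is the correct one. The paper asserts that $h^{-1}(y)$ meets ``at least $s\binom{n}{d+1}$ of the $d$-faces of $K$''. The $s$-overlap property as defined in the introduction only provides $s\,|\faces{K}{d}|$ of them. Carried through honestly, the argument gives precisely your bound
\[
|I|\;\ge\;\frac{s}{d+1}\cdot\frac{|\faces{K}{d}|}{\binom{n}{d+1}}\cdot n .
\]
This equals $\frac{s}{d+1}\,n$ only when $K$ contains every $(d+1)$-subset of its vertex set, or if one measures the overlap fraction against $\binom{n}{d+1}$ instead of $|\faces{K}{d}|$.

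The refinements you sketch cannot recover $\beta\ge s/(d+1)$, because that bound is false for sparse $K$. Let $K$ be a single $d$-simplex on $\{1,\dots,d+1\}$ together with isolated vertices $d+2,\dots,n$. Every continuous map $K\to\mathbb{R}^d$ trivially has the $1$-overlap property in dimension $d$, since there is only one $d$-face. Realize $K$ as a nerve by $C_1=\cdots=C_{d+1}=\{0\}$ and $C_j=\{p_j\}$ for distinct points $p_j\neq 0$ in $\mathbb{R}^m$. Let $f$ be a linear projection to $\mathbb{R}^d$ that is injective on $\{0,p_{d+2},\dots,p_n\}$. Every fiber of $f$ then meets at most $d+1$ members of $F$, while the claimed bound demands $n/(d+1)$, which is larger once $n>(d+1)^2$.

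Your two proposed fixes fail for concrete reasons. The charge is forced by which $C_i$ the fiber actually meets, so you cannot route it to a low-degree vertex. Passing to the full simplex loses the containment $g(\tau)\subset\bigcup_{i\in\tau}C_i$ on non-faces of $K$.

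So do not try to interpret your way to the stated constant. Instead, state the density-weighted conclusion $\beta\ge\frac{s}{d+1}\,|\faces{K}{d}|/\binom{n}{d+1}$, which is at least $\frac{s\alpha}{d+1}$ under the density hypothesis of Problems~\ref{con:FracHelly} and~\ref{con:top2ndsel}. Since $\alpha$ is fixed there, this is all that is needed for the intended implication from Problem~\ref{con:top2ndsel} to Problem~\ref{con:FracHelly}, and your argument already proves it.
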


For the remainder of this section we state some special cases of Problem~\ref{con:top2ndsel} that we are able to establish. Firstly, if $\mu_d$ denotes the best constant such that the $(n-1)$-simplex satisfies a $\mu_d$-overlap theorem in dimension $d$, and $a\in (0,1)$, $\alpha>1-a\mu_d$, then an $\alpha$-dense complex $X$ satisfies a $(1-a)\mu_d$-overlap theorem. Indeed we can extend any continous map $X\to \mathbb R^d$ to the whole simplex, apply Gromov's result and then erase the non-faces of $X$. 

A second case is that of random simplicial complexes. We expect that this is known to experts, but we have not found a reference for it, so for completeness we give a proof in Section \ref{sec:fractional}. 

\begin{prop}\label{prop:random} Let \(K\) be the
random \(d\)-dimensional subcomplex of $[n]^{*(d+1)}$ obtained by retaining each \(d\)-face 
independently with probability \(p\), while keeping the full
\((d-1)\)-skeleton. Then with probability tending to one as $n$ tends to infinity, the class of continuous maps from $K$ to $\mathbb R^d$ satisfies the $\mu_d$-overlap property in dimension $d$ with $\mu_d=\frac{1}{2^{d^2+3}d^{d+1}(d+1)!}$. 
\end{prop}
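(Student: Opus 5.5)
The plan is to run Gromov's cochain (``pagoda'') proof of the topological overlap theorem on $K$. That proof uses only two inputs: coboundary expansion in dimensions $0,\dots,d-1$, and a filling bound in the top dimension. The lower skeleton of $K$ is the full $(d-1)$-skeleton of the complete $(d+1)$-partite complex $[n]^{*(d+1)}$, which is deterministic. Only the top dimension is random, and there we will show that w.h.p. $K$ loses at most a constant factor (say $\tfrac12$) compared with the complete complex. Plugging these expansion constants into Gromov's inequality should give $\mu_d$ of the stated form. The factor $2^{d^2}d^{d+1}(d+1)!$ is what one expects from multiplying $d$ expansion constants of order $1/(2^{d}d)$ together with the Gromov normalisation.

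\emph{Step 1 (deterministic part).} Prove that $[n]^{*(d+1)}$ is a coboundary expander in every dimension $k\le d-1$, with constant $\eta_k\ge c_k>0$ independent of $n$. For every $k$-cochain $a$ with $\mathbb F_2$ coefficients we want
\[
\frac{|\delta a|}{|X^{(k+1)}|}\;\ge\;\eta_k\,\min_b\frac{|a+\delta b|}{|X^{(k)}|}.
\]
This follows from the standard cone (averaging over apex) argument. Because the complex is a join, we cone from a uniformly random vertex in a colour class not used by the face; this gives $\eta_k$ of order $1/(d+1)$.

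\emph{Step 2 (random top dimension).} We need the corresponding bound for $(d-1)$-cochains $a$, with $\delta a$ counted only on retained $d$-faces:
\[
|\delta_K a|\;\ge\;\tfrac{p}{2}\,|\delta a| \qquad\text{for all } a .
\]
It suffices to take $a$ of minimal norm in its class $a+\delta b$, say $|a|=k$.
\begin{itemize}
\item By Step 1 applied to the complete complex, $|\delta a|\ge c\,n\,k$, since each $(d-1)$-face lies in $n$ top faces.
\item $|\delta_K a|$ is a sum of independent Bernoulli variables with mean $p|\delta a|$, so Chernoff gives deviation probability at most $e^{-c'pnk}$.
\item The number of minimal cochains of weight $k$ is at most $\binom{(d+1)n^d}{k}\le e^{dk\log n+O(k)}$.
\end{itemize}
Since $pn\gg\log n$, a union bound over all $k$ succeeds w.h.p. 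No union bound over continuous maps is needed, because all randomness is confined to these finitely many cochain inequalities. This is the main obstacle. I expect the delicate point to be the minimality bookkeeping: the Chernoff exponent must be linear in $k$ with a constant beating $d\log n$, uniformly down to $k=1$. For $k$ small I would first try a local argument, namely that each vertex link is w.h.p. a dense random bipartite-type complex, before falling back on the global count.

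\emph{Step 3 (apply Gromov).} Given a continuous $f\colon K\to\mathbb R^d$, first make it generic/PL and compare it with a fine triangulation of $\mathbb R^d$. Suppose no point is covered by a $\mu_d$-fraction of the $d$-faces. Then Gromov's inductive construction builds cochains $a_0,\dots,a_d$ whose norms are controlled by the $\eta_k$ from Steps 1--2. At the top level these produce a nontrivial $d$-cocycle of small norm, contradicting the fact that the fundamental class of $\mathbb R^d$ (one-point compactified) must be hit. Tracking the constants through this contradiction yields the stated $\mu_d=\bigl(2^{d^2+3}d^{d+1}(d+1)!\bigr)^{-1}$.
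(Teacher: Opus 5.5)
Your three steps follow the paper's route. The paper simply cites each ingredient: Wild's cofilling bound \cite[Proposition~8.6]{wild_thesis} for your Step 1, the Chernoff inheritance argument of Dotterrer--Kahle \cite{dotterer_kahle} for Step 2, and \cite[Theorem~8]{dotterer_kaufman_wagner} for Step 3. Your Step 2 is sound as sketched, given Step 1 in dimension $d-1$ and $p\ge C(d)\log n/n$, which the paper also needs. The missing ingredient is in Step 3, where you assert that Gromov's method uses only cofilling.

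For a general complex it does not. The Dotterrer--Kaufman--Wagner theorem has three hypotheses:
\begin{enumerate}
\item an $L$-cofilling inequality in dimensions $1,\dots,d$;
\item a cosystole bound $\vartheta$ (automatic here, since your inequalities force vanishing cohomology below dimension $d$, so $\vartheta=1$);
\item $\varepsilon$-\emph{local sparsity} of $K$.
\end{enumerate}
Its conclusion is an overlap of $\frac{1}{2(d+1)!L^d}-O(d\varepsilon)$. The sparsity term controls the local correction cochains, supported on stars of single faces, that enter the inductive construction. For the complete simplex it holds automatically, but for $K$ you must prove it, e.g.\ by a Chernoff bound on the number of retained $d$-faces through each vertex, giving $\varepsilon=O((d+1)/n)$ w.h.p. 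You then absorb the error for large $n$. This is exactly where the ``as $n\to\infty$'' and the extra factor $1/(4d)$ in $\mu_d$ come from.

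Your description of the contradiction is also off. Since $|K^{(d)}|\gg|K^{(d-1)}|$, there are plenty of small $d$-cochains on $K$ that are not coboundaries, so a ``nontrivial $d$-cocycle of small norm'' contradicts nothing. In Gromov's scheme the fillings are pushed \emph{down} through the dimensions. The contradiction is that a nontrivial cocycle of degree $<d$ (ultimately the constant $0$-cochain, of norm $1$) is forced below the cosystole bound.

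Step 1 also fails as written, and with it your constants. Cone from a fixed apex $v$ of colour $c$, with $a_v(\tau)=a(v\tau)$. Then $a+\delta a_v$ equals $\delta a(v\sigma)$ on faces $\sigma$ avoiding colour $c$ and vanishes on faces through $v$. On a face $w\rho$ with $w\neq v$ of colour $c$, however, it equals $a(w\rho)+a(v\rho)$, which $\delta a$ does not control. For instance (when $d\ge 2$), let $S$ contain half of every colour class and put $a=\delta\mathbf{1}_S\in C^1$. Then $\delta a=0$, yet for every apex $v$ the cochain $a+\delta a_v$ is nonzero on roughly half the edges $wu$ with $w$ in the class of $v$. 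Choosing the apex separately for each face does not assemble into a single cochain $b$.

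Since $[n]^{*(d+1)}$ is not a cone, you need genuine cone functions whose fillings pass through other colour classes and grow with dimension. Wild's recursion does this and gives the top-dimensional constant $L_0=(3\cdot 2^{d-1}-1)/(d+1)$, exponential in $d$. That is not an expansion constant of order $1/(d+1)$, which also clashes with your own heuristic of constants of order $1/(d2^d)$.

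Because the proposition asserts an explicit $\mu_d$, this bookkeeping is the substance, not a formality. The paper takes $L=d2^d\ge 2L_0$ in all dimensions $1,\dots,d$ and gets the main term
\[\frac{1}{2(d+1)!L^d}=\frac{1}{2^{d^2+1}d^d(d+1)!}.\]
It then gives up a factor $4d$ to absorb the sparsity error, arriving at $\mu_d$.
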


By Proposition \ref{prop:2ndsel implies fractional} this implies an overlap fractional Helly theorem for certain random intersection patterns. 

The second special case is for a restricted class of continuous maps. Let $K$ be a $d$-dimensional simplicial complex and let $M^d$ be a $d$-manifold. Define an $\ell$-tame map  $f \colon K \to M^d$, to be a generic piecewise linear map in which for any pairwise disjoint faces $\sigma_1,\sigma_2, \ldots \sigma_m$ with $\sum_{i=1}^{m}(d-\dim(\sigma_i))=d$, the intersection $\bigcap_{i=1}^{m} f(\sigma_i)$ has at most $\ell$ points.

\begin{theorem}\label{tame} For every $\alpha\in (0,1]$ and integers $d\geq 1$, $\ell\geq 1$ there exists a constant $s_0 = s_0(\alpha,d,\ell)>0$ such that the following holds. If $K$ has $n$ vertices and at least $\alpha \binom{n}{d+1}$ faces of dimension $d$, then the class of $\ell$-tame maps from $K$ to $M^d$ satisfies the $s$-overlap property in dimension $d$ for some $s\geq s_0$.
\end{theorem}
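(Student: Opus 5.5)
We follow the standard proof of the second selection lemma, replacing its two linear-geometry ingredients by statements valid for $\ell$-tame maps. The linear proof combines three things: a hypergraph Erd\H{o}s--Simonovits supersaturation argument, the colorful Tverberg theorem, and a double count. Supersaturation produces many complete $(d+1)$-partite subhypergraphs $K_{t,\dots,t}$ inside the $d$-faces of $K$. Colorful Tverberg turns each such subhypergraph into many "crossing" $(d+1)$-tuples of $d$-simplices, whose images share a common point. The double count then yields a point lying in many images.

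For tame maps we use Gromov's topological overlap for the simplex instead of colorful Tverberg, and tameness as a substitute for the fact that $d+1$ generic linear $d$-simplices meet in at most one point. The steps are as follows.

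\textbf{Step 1 (supersaturation).} Since $K$ has at least $\alpha\binom{n}{d+1}$ $d$-faces, the Erd\H{o}s supersaturation theorem for $(d+1)$-uniform hypergraphs gives the following: for a fixed large $t=t(d)$, $K$ contains at least $c(\alpha,d,t)\, n^{(d+1)t}$ copies of the complete $(d+1)$-partite hypergraph $K^{(d+1)}_{t,\dots,t}$. Each copy $H$ is a subcomplex isomorphic to $[t]^{*(d+1)}$, so it suffices to understand $f|_{\|H\|}$.

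\textbf{Step 2 (many overlapping $(d+1)$-tuples inside each copy).} For $f$ restricted to a join $[t]^{*(d+1)}$ we want a point $y$ covered by the images of a positive fraction of its $d$-faces. This does not follow from Gromov's theorem for $\Delta^N$ directly. Instead we use the colorful/join version of topological overlap. That version holds for joins $[t]^{*(d+1)}$ with constant $\mu_d$: it is exactly what Proposition \ref{prop:random} exploits, and it is deterministic for the full join.

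Pigeonholing over pairwise disjoint $d$-faces through $y$, we then get, for $t$ large, $d+1$ pairwise vertex-disjoint $d$-faces $\sigma_1,\dots,\sigma_{d+1}$ of $H$ whose images share a point. In fact we get a positive fraction of all $(d+1)$-tuples of disjoint $d$-faces of $H$ with this property.

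\textbf{Step 3 (counting via tameness).} Let $T$ be the set of $(d+1)$-tuples of pairwise disjoint $d$-faces of $K$ whose images have a common point. Steps 1 and 2, together with double counting over copies of $H$, give $|T|\ge c'(\alpha,d)\,n^{(d+1)^2}$.

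For each tuple in $T$ the common intersection $\bigcap f(\sigma_i)$ is nonempty. Genericity puts each such point in the relative interiors, and the codimensions sum to $d\cdot(d+1)$. We need the sum condition $\sum (d-\dim\sigma_i)=d$, so we pass to a codimension-$d$ sub-configuration. Concretely, any $d+1$ generic $d$-dimensional images meeting at a point give a transversal meeting of a $(d-1)$-face of one with the others. Alternatively, we first apply the tame definition to the pair $(\sigma_1,\tau)$ with $\tau$ a $0$-codimension piece.

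The cleaner route is to count pairs (point, tuple). The finitely many candidate points are the vertices of the arrangement of images of codimension summing to $d$, and tameness bounds by $\ell$ the number of such vertices arising from each fixed tuple of faces. This gives at most $\ell\cdot O(n^{(d+1)^2})$ such vertices.

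Each tuple in $T$ determines a nonempty cell of $\bigcap f(\sigma_i)$, and each cell has a vertex of this type. Averaging then shows that some vertex $y$ is charged by at least $c''(\alpha,d)\,\ell^{-1}n^{(d+1)^2}\big/\bigl(\ell\, n^{d(d+1)}\bigr)$ $d$-faces.

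\textbf{Step 4 (extracting $y$).} The average in Step 3 is really over points: a point $y$ lying in $m$ images of $d$-faces lies in at most $\binom{m}{d+1}$ tuples. The count of Step 3 therefore forces some $y$ with $m\ge s_0 n^{d+1}$, where $s_0$ depends only on $\alpha,d,\ell$.

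\textbf{Main obstacle.} The main obstacle is Step 3: converting "many intersecting tuples" into "one deep point" without linearity. In the linear case the convexity of the intersections plus Helly/Tverberg does this. Here the bound of $\ell$ on intersection points of codimension-$d$ configurations must play that role. We will try first to reduce all counting to vertices of the image arrangement (faces whose codimensions sum to $d$) and to charge each intersecting tuple to a lexicographically minimal such vertex in its common intersection.
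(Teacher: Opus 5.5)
\textbf{There is a genuine gap in Step 3.} The failing step is where you pass from $d+1$ disjoint $d$-faces whose images share a point to a crossing point of a configuration of disjoint faces with $\sum(d-\dim\rho_i)=d$. Your claim that every cell of $\bigcap f(\sigma_i)$ has a vertex of this type is a convexity fact. It holds for facewise linear maps, where $\partial f(\sigma_i)\subset f(\partial\sigma_i)$. An $\ell$-tame map, however, is only PL with respect to a subdivision, so the boundary of $f(\sigma_i)$ can consist of fold images of interior points.

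This already fails for $d=1$, where every generic PL map is $1$-tame. Take an edge $u_1v_1$ with $f(u_1)=0$, $f(v_1)=1$ and a fold reaching $10$. Take a disjoint edge $u_2v_2$ with $f(u_2)=20$, $f(v_2)=21$ and a fold dipping to $5$. The images meet in $[5,10]$, yet no vertex of either edge lands in the image of the other, so there is nothing to charge. Your alternative of applying tameness to $(\sigma_1,\tau)$ with $\tau$ of codimension $0$ does not help: that configuration has codimension sum $0$, and tameness says nothing about it.

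The missing idea is the vertex-constrained colorful Tverberg theorem (Theorem \ref{col_tv}), which the paper uses instead of an overlap theorem for the join. It gives $p>d$ pairwise disjoint faces of $[t]^{*(d+1)}$ with only $(d+1)(p-1)+1$ vertices in total, i.e.\ codimension sum exactly $d$, whose images meet. The positive-codimension members (at most $d$ faces, at most $d^2$ vertices) have at most $\ell$ common image points. So the common point is automatically a crossing point, and it lies in the image of a remaining full-dimensional face $\sigma_0$. Your Step 2 produces only full-dimensional tuples and discards exactly this structure.

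\textbf{The counting in Steps 3--4 also does not close as written.} A bound of $\ell\cdot O(n^{(d+1)^2})$ candidate points has the same order as $|T|$, and it is inconsistent with your later division by $\ell n^{d(d+1)}$. Even if some $y$ were charged by $\Omega(n^{d+1})$ tuples, $\binom{m}{d+1}=\Omega(n^{d+1})$ only gives $m=\Omega(n)$, not $\Omega(n^{d+1})$.

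The leverage must come from the fact that crossing points are determined by at most $d^2$ vertices, so there are only $O(\ell n^{d^2})$ of them. The paper double counts incidences (full-dimensional face $\sigma_0$, crossing configuration) over the $\delta n^{t(d+1)}$ copies of $[t]^{*(d+1)}$. Each incidence spans at most $d^2+d+1$ vertices, so it extends to at most $b\,n^{t(d+1)-(d^2+d+1)}$ copies. This gives $\Omega(n^{d^2+d+1})$ incidences against $O(n^{d^2})$ crossing points, hence a point lying in $\Omega(n^{d+1})$ images of $d$-faces.

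A tuple-based count could be repaired similarly. For a crossing built from $k$ positive-codimension faces, bound the tuples charged to it by $n^{d}m^{d+1-k}$ rather than $m^{d+1}$. That repair only matters once the charging itself is valid, which requires the Tverberg-type input above.
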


\section{Overlap for the sliced cube}\label{sec:sliced-cube}
A $d$-dimensional simplicial complex is called {\em balanced} if its underlying graph
(i.e., the $1$-skeleton) admits a proper $(d+1)$-coloring, where adjacent vertices are given distinct colors. Note that if $K$ is a simplicial complex, then its barycentric subdivision $\text{sd}(K)$ is balanced; the vertices of $\text{sd}(K)$ are in bijective correspondence with the faces of $K$, and assigning to each vertex of $\text{sd}(K)$ the dimension of its corresponding face in $K$ yields a proper $(d+1)$-coloring. 

\medskip Here is the key lemma needed for the proof of Theorem \ref{t:top col helly}.

\begin{lemma}\label{l:simplicial}
Let $T$ be a triangulation of the $(d+1)$-dimensional cube and let $L$ be a $d$-dimensional balanced simplicial complex. For any simplicial map $f \colon T \to L$ there exists a vertex $v\in L$ such that $f^{-1}(v)$ contains a path that connects disjoint facets of the cube. 
\end{lemma}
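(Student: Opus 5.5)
Compose $f$ with the proper $(d+1)$-coloring $c\colon V(L)\to[d+1]$ of $L$, giving a coloring $\chi=c\circ f$ of the vertices of $T$ with $d+1$ colors. This reduces the lemma to the $d$-dimensional HEX lemma. The cube $[0,1]^{d+1}$ has $d+1$ pairs of opposite facets. Gale's HEX theorem, in its version for an arbitrary triangulation of the $(d+1)$-cube, states the following: for any coloring of the vertices of $T$ with $d+1$ colors, there is a color $i$ whose monochromatic subgraph of the $1$-skeleton contains a path joining the two opposite facets $\{x_i=0\}$ and $\{x_i=1\}$. Here the color-to-direction correspondence is fixed and arbitrary.

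Given such a path $v_0v_1\cdots v_r$, every vertex on it has color $i$, and consecutive vertices are joined by edges of $T$. Since $f$ is simplicial, $f(v_k)$ and $f(v_{k+1})$ are equal or adjacent in $L$. They cannot be adjacent, because adjacent vertices of $L$ receive distinct colors while both have color $i$. Hence all $f(v_k)$ equal a single vertex $v$ of $L$. Each edge $v_kv_{k+1}$ is then mapped to $v$, so the whole geometric path lies in $f^{-1}(v)$, and it joins the disjoint facets $\{x_i=0\}$ and $\{x_i=1\}$. This proves the lemma.

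The main work is the HEX lemma for an arbitrary triangulation $T$, which I would prove by Sperner's lemma. Suppose, for contradiction, that no color $i$ connects $x_i=0$ to $x_i=1$. For each $i$, let $A_i$ be the set of color-$i$ vertices reachable by color-$i$ paths from vertices lying on the facet $x_i=0$. Define a labeling $\lambda$ of $V(T)$ as follows. Assign $\lambda(v)=i$ if $v$ has color $i$ and $v\in A_i$. Otherwise, assign $\lambda(v)=0$, or handle this case by the standard trick of adding a fresh label.

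The cleaner alternative is the map argument. Define $g\colon T\to\Delta^d$ or to the cube boundary by piecewise-linear interpolation of vertex images, where a vertex of color $i$ goes to $e_i$ if it lies in $A_i$ and to $-e_i$ otherwise. One checks that for every simplex of $T$ the images of its vertices avoid an antipodal pair $\pm e_i$. This holds because two adjacent color-$i$ vertices are both in $A_i$ or both out of it. Consequently the image avoids the origin, and $g$ is a map from the $(d+1)$-cube into the boundary of the cross-polytope, which is $S^d$.

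On the boundary of the cube, $g$ is homotopic to the standard degree-one map. The key observation is that on the facet $x_i=0$ no vertex maps to $-e_i$, and by assumption on $x_i=1$ no vertex maps to $e_i$. These two facts let us write a straight-line homotopy to the radial map that avoids $0$. A degree-one map $\partial[0,1]^{d+1}\to S^d$ that extends over the cube is impossible, which gives the contradiction.

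I expect the main obstacle to be the careful verification of this boundary-degree argument, specifically the claim that the coordinate-wise sign conditions on facets force degree one. Everything else is direct.
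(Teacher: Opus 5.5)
Your reduction is the paper's proof of this lemma: compose $f$ with a proper $(d+1)$-coloring of $L$, apply the HEX lemma to the resulting vertex coloring of $T$, and observe that two adjacent path vertices of the same color cannot map to an edge of $L$, so the whole path collapses to a single vertex $v$.

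Your sketch of HEX uses the same $\pm e_i$ reachability labeling as the paper, but argues differently at the end. The paper extends $T$ to a triangulation of the cross-polytope and obtains a simplicial retraction onto its boundary; you instead use a straight-line homotopy on the boundary of the cube. Your version works. With your convention, though, the boundary map is homotopic to the antipodal of the radial projection from the center of the cube, so it has degree $(-1)^{d+1}$ rather than $1$. This is harmless, since any nonzero degree gives the contradiction.
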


We prove Lemma \ref{l:simplicial} using the following well-known variant of Gale's HEX Lemma \cite{Gale1979}. (For completeness we include a proof at the end of this section.)

\begin{lemma*}[HEX]
Let $T$ be a triangulation of the $(d+1)$-dimensional cube. For any function $h \colon V(T) \to [d+1]$  there exists a path $\gamma$ in the 1-skeleton of $T$ that connects disjoint facets of the cube such that $h(v)$ is constant for every vertex $v$ in $\gamma$.
\end{lemma*}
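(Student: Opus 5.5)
We will prove the HEX Lemma by contradiction, using the Brouwer fixed point theorem; this is essentially Gale's original argument. Identify the cube with $[0,1]^{d+1}$. Its disjoint facets are exactly the opposite pairs $\{x_i=0\}$ and $\{x_i=1\}$. We aim to show slightly more than stated: for some $i$, the colour class $h^{-1}(i)$ contains a path joining $\{x_i=0\}$ to $\{x_i=1\}$. Suppose this fails for every $i$.

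For each $i$, let $U_i$ be the set of vertices $v$ with $h(v)=i$ that can be reached from a vertex of colour $i$ on $\{x_i=0\}$ by a path all of whose vertices have colour $i$. For a vertex $v$ with $h(v)=i$, put $s_v=+1$ if $v\in U_i$ and $s_v=-1$ otherwise. The assumption gives two facts.
\begin{itemize}
\item Every colour-$i$ vertex on $\{x_i=0\}$ has $s_v=+1$, by definition of $U_i$.
\item Every colour-$i$ vertex on $\{x_i=1\}$ has $s_v=-1$, since otherwise we would have the forbidden path.
\end{itemize}
Fix a small $\delta>0$. Define $g$ on vertices by $g(v)=v+\delta s_v e_{h(v)}$, and extend $g$ affinely over each simplex of $T$. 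Then for $x$ in a simplex $\sigma$ with barycentric coordinates $\lambda_v$,
\[ g(x)-x=\delta\sum_{i=1}^{d+1}\Bigl(\sum_{v\in\sigma,\ h(v)=i}\lambda_v s_v\Bigr)e_i . \]

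\textbf{Key step: $g$ has no fixed point.} Let $\sigma$ be the carrier of $x$, so every $\lambda_v>0$ for $v\in\sigma$. If $g(x)=x$, then for each colour $i$ that appears on $\sigma$ the inner sum must vanish. Hence $\sigma$ contains two colour-$i$ vertices with opposite signs. These two vertices are adjacent and both have colour $i$, so one lies in $U_i$ if and only if the other does. This contradicts their opposite signs. Some colour appears on $\sigma$, so no fixed point exists.

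\textbf{Keeping the map inside the cube.} This is the main technical point. Let $r$ be the nearest-point retraction of $\mathbb{R}^{d+1}$ onto the cube. Brouwer gives a fixed point $x$ of $r\circ g$. If $g(x)\ne x$, then $x$ lies on the boundary and $g(x)-x$ lies in the outward normal cone at $x$. Concretely, some coordinate, say the $i$-th, satisfies one of the following.
\begin{itemize}
\item $x_i=0$ and $(g(x)-x)_i<0$.
\item $x_i=1$ and $(g(x)-x)_i>0$.
\end{itemize}
Consider the first case. The carrier simplex $\sigma$ of $x$ lies in the facet $\{x_i=0\}$, since $T$ triangulates the cube and hence its boundary. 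So every colour-$i$ vertex of $\sigma$ has $s_v=+1$, which forces $(g(x)-x)_i\ge 0$, a contradiction. The second case is symmetric, using the second of the two facts above.

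Therefore $g(x)=x$. This contradicts the key step, and the HEX Lemma follows. The main care needed is in this boundary analysis: one must use that carriers of boundary points lie in the corresponding faces of the cube.
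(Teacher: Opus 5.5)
Your proof is correct. It shares the paper's combinatorial core: both arguments sign each colour-$i$ vertex by whether a colour-$i$ path joins it to a fixed facet orthogonal to $e_i$, and both use the absence of a crossing path only at the boundary.

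The difference is in how the topology is invoked. The paper sends $v$ to the vertex $\pm v_{h(v)}$ of a cross-polytope $P$. It then builds an auxiliary triangulation $T^*$ of $P$ by joining boundary faces of $T$ to the faces of $S=\partial P$ that see them. This yields a simplicial retraction $T^*\to S$, contradicting the no-retraction theorem. You instead turn the same signs into the piecewise-linear displacement field $g(x)-x$ and apply Brouwer to $r\circ g$. In effect you prove HEX via the Poincar\'e--Miranda theorem.

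The pieces correspond directly:
\begin{itemize}
\item Your key step (no simplex carries opposite signs in one colour) is exactly the paper's check that edges of $T$ go to edges of the flag complex $S$.
\item Your boundary analysis (coordinatewise clamping, plus the fact that the carrier of a point with $x_i=0$ lies in that facet) plays the role of the paper's check on the edges of $T^*$ between $V(S)$ and $V(T)$.
\end{itemize}

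Your route avoids constructing $T^*$ and verifying that it really triangulates $P$, which the paper leaves as ``straightforward to check.'' The price is a little convex geometry. The paper's route keeps everything simplicial, so its only verification is that a vertex map sends edges to edges.
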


\begin{proof}[Proof of Lemma \ref{l:simplicial}]
Consider a proper $(d+1)$-coloring,  $\chi : V(L) \to [d+1]$,  of the underlying graph of $L$. 
This induces a function $h \colon V(T) \to [d+1]$ by setting 
\[h(v) = \chi\big( f(v) \big).\]
By the HEX Lemma there exists a path $\gamma$ in $T$ that connects disjoint facets of the cube such that $h(v)$ is constant for every vertex $v$ in $\gamma$, and we claim that also $f(v)$ is constant for every $v$ in $\gamma$. Indeed, any two neighboring vertices $v$ and $w$ in $\gamma$ must either map to the same vertex, or map to an edge of $L$. The latter case is not possible since $L$ is balanced. 
\end{proof}

Before getting to the proof of Theorem \ref{t:top col helly} we make a few more observations. We note that $\cubical{n}{d+1}$ admits a natural structure of a cubical complex, where the vertices are the integer lattice points $[n]^{d+1}$ and the maximal cells are translates of the unit cube $[0,1]^{d+1}$. When we speak of a triangulation of $\cubical{n}{d+1}$ we will always consider a triangulation which refines the natural cubical structure. In this way the sets $\slice{i}{j}$ appear naturally as subcomplexes of any triangulation of $\cubical{n}{d+1}$.

\begin{proof}[Proof of Theorem \ref{t:top col helly}]
We first consider the case of a simplicial map $f \colon T \to L$ where $T$ is a triangulation of $\cubical{n}{d+1}$ and $L$ is a $d$-dimensional balanced simplicial complex.  By Lemma \ref{l:simplicial} there exists a vertex $v\in L$ and a path $\gamma$ in $T$ which connects two disjoint facets of $\cubical{n}{d+1}$ such that $f(w) = v$ for every vertex $w$ in $\gamma$. Suppose the two facets connected by $\gamma$ are orthogonal to the standard unit vector $e_i \in \mathbb{R}^{d+1}$. This means that $\gamma$ intersects the sets $\slice{i}{j}$ for every $j\in [n]$, and therefore $v\in f(\slice{i}{j})$ for every $j\in [n]$.

The case of a general continuous map $f \colon \cubical{n}{d+1} \to \|Y\|$ now follows by standard compactness arguments. Indeed, by compactness, there exists an $\varepsilon > 0$ such that 
\[\bigcap_{j\in J\subset [n]}f(\slice{i}{j}) = \emptyset \implies \bigcap_{j\in J\subset [n]} \big( f(\slice{i}{j})+B_\varepsilon\big) = \emptyset,\] where $(X+B_\varepsilon)$ denotes the Minkowski sum of $X$ and a ball of radius $\varepsilon$ centered at the origin. Let $L_Y$ be a triangulation of $Y$ (which exists by hypothesis). For $m$ sufficiently large, there exists a triangulation $T$ of $\cubical{n}{d+1}$ and a simplicial approximation $\hat{f} \colon T \to \text{sd}^m(L_Y)$ of $f$, such that $\hat{f}(\slice{i}{j}) \subset \big( f(\slice{i}{j}) + B_\varepsilon \big)$ for every $i\in [d+1]$ and $j\in [n]$. Since $\text{sd}^m(L_Y)$ is balanced, the result now follows by applying the simplicial case to the simplicial map $\hat{f}$. 
\end{proof}

\begin{proof}[Proof of Corollary \ref{c:waist}]
Let $f \colon [0,1]^{d+1}\to \|Y\|$ be a continuous map. For each $n$, let $g_n(x)=f(x/n)$, and restrict $g_n$ to $\cubical{n}{d+1}$. By Theorem \ref{t:top col helly}, there exist $i_n \in [d+1]$ and $y_n \in \|Y\|$ such that $g_n^{-1}(y_n)$ intersects each slice of the form $\slice{i_n}{j}$ for $j\in [n]$. Rescaling by the factor $1/n$, the same is immediately true for $X_n:=f^{-1}(y_n)$ with respect to the slices $\slice{i_n}{j/n}$ of $[0,1]^{d+1}$; that is, $X_n$ intersects $\slice{i_n}{j/n}$ for every $j\in [n]$.
Since $f$ is continuous and $[0,1]^{d+1}$ is compact, the set-valued map defined by $y \mapsto f^{-1}(y)$ sends points of $\|Y\|$ to compact subsets of $[0,1]^{d+1}$.

Since the space of compact subsets endowed with the Hausdorff metric is compact itself, we can extract a subsequence $n_k$ such that $i_{n_k}$ is constant (we denote this constant value by $i$ in what follows), $y_{n_k}$ converges to some $y \in \|Y\|$, and $X_{n_k}$ converges to some compact set $X$. For each $x \in X_{n_k}$ we have $f(x)=y_{n_k}$. If a sequence $x_{n_k} \in X_{n_k}$ converges to some $x$, then $x \in X$ by definition of Hausdorff convergence, and $f(x)=y$ by continuity, so $X\subset f^{-1}(y)$. Conversely, any $x\in f^{-1}(y)$ is the limit of such a sequence, so $f^{-1}(y)\subset X$. We conclude that $f^{-1}(y)=X$.

On the other hand, for every $s \in [0,1]$ consider a sequence $s_k$ converging to $s$ such that for each $k$, $s_k=\frac{t_k}{n_k}$ for some $t_k$. 
Since $\slice{i}{s_k} \cap X_{n_k}\neq \emptyset$, and $X$ is compact, then $\slice{i}{s} \cap X \neq \emptyset$, which concludes the proof. 
\end{proof}

\begin{proof}[Proof of Proposition \ref{prop:good covers}]
For a simplicial complex $K$, we let $N(K)$ denote the nerve of the facets of $K$. We denote by $\cN{K}$ the \emph{conerve} of $K$, which is defined as the simplicial complex that has the faces of $K$ as its vertex set, and a subset of vertices is a face of $\cN{K}$ if the corresponding faces of $K$ have nonempty intersection.\footnote{To our knowledge, the term \emph{conerve} was coined by Attila Jung.}

We observe that $N(\cN{K})  = K$. Indeed, the facets of $\cN{K}$ are the stars $\mathrm{st}(v):=\{\tau\in K : v\in \tau\}$ for $v \in \faces{K}{0}$; these are maximal among the collections of faces of $K$ with a common vertex, and every facet of $\cN{K}$ arises this way. Thus the vertices of $N(\co(K))$ are the facets $\mathrm{st}(v)$, which we identify with the vertices $v$ of $K$. A set of facets $\mathrm{st}(v_1),\dots,\mathrm{st}(v_r)$ forms a face of $N(\co(K))$ if and only if they have a common element, that is, a face of $K$ containing $v_1,\dots,v_r$, which happens if and only if $\{v_1,\dots,v_r\}$ is a face of $K$. Hence $N(\co(K))=K$.

We identify the conerve $\cN{K}$ with its geometric realization in $\mathbb{R}^m$, for some sufficiently large $m$.  Let $F = \{C_1, \dots, C_n\}$ be the collection of facets of $\cN{K}$ which is a family of convex sets in $\mathbb{R}^m$, and by the observation above the nerve of $F$ is isomorphic to $K$. It remains to define the map $f$. 

We start by defining the map $f$ on $\cN{K}$, so that the containment condition \eqref{condition: containment} is satisfied. The map can then be extended to all of $\mathbb{R}^m$ using Tietze's Extension Theorem. We define $f$ inductively on the skeleton of $\cN{K}$. To define $f$ on the $0$-skeleton, for every $\sigma\in K$, choose a point $p_\sigma \in \bigcap_{i\in\sigma}C_i$ and define $f(\sigma) = p_\sigma$. Now suppose that $f$ has been defined on the $(k-1)$-skeleton of $\cN{K}$. Let $\tau$ be a $k$-face of $\cN{K}$, that is, $\tau$ consists of faces, $\sigma_1$, $\dots$, $\sigma_{k+1}$, of $K$ where $\sigma = \bigcap_{i=1}^{k+1} \sigma_i$ is a nonempty face of $K$. By hypothesis, the intersection $\bigcap_{i\in \sigma} S_i$ is contractible, and by the inductive assumption, for every proper face $\tau'\subset \tau$ we have $f(\tau') \subset \bigcap_{i\in \sigma} S_i$. Therefore $f$ can be extended on $\tau$, and proceeding in this way, $f$ extends to all of $\cN{K}$.

It remains to show that there is a continuous extension $\hat{f} \colon \mathbb{R}^m \to \|Y\|$ such that $\hat{f}|_K = f$. By \cite[Ch.~III, Cor.~8.4]{Hu1965TheoryOfRetracts}, the locally finite simplicial polyhedron $\|Y\|$ (called a \emph{polytope} in Hu's terminology) is an ANR (Hu's unqualified terms ``AR'' and ``ANR,'' used from \S6 onward, mean ``for the class $\mathcal{M}$ of all metrizable spaces''); by \cite[Ch.~III, Thm.~3.2]{Hu1965TheoryOfRetracts}, it is therefore an ANE for the class $\mathcal{M}$; and since it is contractible, \cite[Ch.~II, Thm.~7.1]{Hu1965TheoryOfRetracts} shows it is an AE for that class---i.e.\ every continuous map from a closed subset of a metrizable space into $\|Y\|$ extends over the whole space.
\end{proof}

\begin{proof}[Proof of Corollary \ref{cor:good covers}]
Without loss of generality we may assume that $|F_i| = n$ for every $i \in [d+1]$ by adding copies of some of the sets if necessary. Now arbitrarily label the members of $F_i$ as 
$C_{i,1}, \dots, C_{i,n}$. We will construct a continuous map $f \colon \cubical{n}{d+1} \to \|Y\|$ such that 
$f(\slice{i}{j}) \subset C_{i,j}$
for every $i\in [d+1]$ and $j\in [n]$. The result then follows from Theorem \ref{t:top col helly} (which is a special case of Theorem \ref{thm:convex version}).

It remains to construct the map $f$. We consider the natural cubical cell structure of $\cubical{n}{d+1}$, define the map on its vertices, and proceed inductively. For a vertex $v = (t_1, \dots, t_{d+1}) \in [n]^{d+1}$ we choose (arbitrarily) a point 
\[p \in \bigcap_{i\in [d+1]} C_{i,t_i}, \]
which is possible since this intersection is nonempty: it is the case $I = [d+1]$ of the hypothesis, where $\pi_{d-|I|} = \pi_{-1}$ is trivial precisely when $\bigcap_{i\in [d+1]} C_{i,t_i}$ is nonempty. We set $f(v) = p$. This defines the map $f$ on the 0-skeleton of $\cubical{n}{d+1}$. 
Now suppose $f$ has been defined on the $k$-skeleton for some $0\leq k < d$ such that 
\[\tau \in \slice{i}{j} \implies f(\tau) \subset C_{i,j}\] for every cell $\tau$ in the $k$-skeleton of $\cubical{n}{d+1}$.
Now consider a $(k+1)$-cell $\sigma \in \cubical{n}{d+1}$. The cell $\sigma$ is contained in the intersection of a unique $(d-k)$-tuple of axis parallel hyperplanes, that is, there is a $(d-k)$-element subset $I\subset [d+1]$ and a set of integers $\{t_i\}_{i\in I}$, with $t_i \in [n]$, such that 
\[\sigma \subset \bigcap_{i\in I} \slice{i}{t_i}.\]
Since the boundary $\bry{\sigma}$ lies in the $k$-skeleton,  we have 
\[f\big( \bry{\sigma} \big) \subset  \bigcap_{i\in I} C_{i,t_i}. \]
By assumption,
$\pi_{k}(\bigcap_{i\in I}C_{i,t_i})$ is trivial, 
and since $\bry{\sigma}$ is homeomorphic to a $k$-sphere, we can extend $f$ on $\sigma$ in such a way  that $f(\sigma) \subset \bigcap_{i\in I} C_{i,t_i}$. Proceeding in this way defines the map on the $d$-skeleton of $\cubical{n}{d+1}$ such that $f(\slice{i}{j})\subset C_{i,j}$ for every $i\in [d+1]$ and $j\in [n]$. Finally, we extend $f$ to the rest of $\cubical{n}{d+1}$ by extending over each individual $(d+1)$-cell. Such a cell is contained in no slice, so this is the case $I = \emptyset$, where $\bigcap_{i\in \emptyset} C_{i,t_i} = \|Y\|$ and there is no further containment requirement to be satisfied; the extension exists because $\pi_d(Y)$ is trivial, which is the $I=\emptyset$ instance of the hypothesis.
\end{proof}

\begin{proof}[Proof of Theorem \ref{thm:convex version}]
Without loss of generality we may assume that $|F_i| = n$ for every $i \in [d+1]$, since otherwise we can add copies of certain bodies. We label the members of $F_i$ arbitrarily as 
$C_{i,1}, \dots, C_{i,n}$.  We can mimic the proof of Corollary \ref{cor:good covers} to construct a map $g \colon \cubical{n}{d+1} \to \mathbb{R}^n$ such that $g(\slice{i}{j})\subset C_{i,j}$ for every $i\in [d+1]$ and $j\in [n]$. The result now follows by applying Theorem \ref{t:top col helly} to the composition $f\circ g$.
\end{proof}

We include the following for completeness.

\begin{proof}[Proof of the HEX Lemma] 
We prove the lemma for a triangulation $T$ of the cube $[-1,1]^{d+1}$ with its facets labeled  $F_1, \dots, F_{d+1}$, $-F_1, \dots, -F_{d+1}$ where 
\[F_i = \big\{ (x_1, \dots, x_{d+1}) \in [-1,1]^{d+1} : x_i = 1 \big\}.\]
We claim that for some $i\in [d+1]$ there is a path $\gamma \in T$ which connects facets $F_i$ and $-F_i$ such that $h(v) = i$ for every vertex in $\gamma$. 
The goal is to extend $T$ to a triangulation $T^*$ of the $(d+1)$-dimensional cross-polytope and argue that if the desired path does not exist, then $h$ induces a simplicial retraction from $T^*$ to its boundary, contradicting the no-retraction theorem (a consequence of Brouwer's fixed point theorem). 

To this end, we introduce $2(d+1)$ new vertices  $\pm v_1, \dots, \pm v_{d+1}$ where 
\[v_i = (d+2)e_i.\]
The convex hull of these vertices, $P = \conv \{\pm v_1, \dots, \pm v_{d+1}\} $, is a $(d+1)$-dimensional cross-polytope containing the cube $[-1,1]^{d+1}$. Let $S$ be the boundary complex of $P$, and note that $S\cap [-1,1]^{d+1} = \emptyset$. We define a triangulation $T^*$ of $P$ that contains $T$ as a subcomplex in the following way. The vertex set of $T^*$ is the set $V(P)\cup V(T)$. The faces of $T^*$ consists of all faces of $T$ together with subsets of the form $\sigma \cup \tau$ where
\[\sigma = \{v_i\}_{i\in I} \cup \{-v_j\}_{j\in J} \in S\; \text{ and } \; \textstyle \tau  \subset \big( \bigcap_{i\in I}F_i \big) \cap \big( \bigcap_{j\in J}(-F_j) \big) \in T,\]
taken over all possible subsets $I, J\subset [d+1]$ with $I\cap J = \emptyset$. Geometrically this corresponds to joining a face $\sigma \in S$ with a face $\tau\in T$ which is ``visible'' from every vertex of $\sigma$. It is straightforward to check that $T^*$ is a triangulation of $P$ and that $T$ is a subcomplex of $T^*$. 

Now suppose the desired path in $T$ does not exist. We will show that this implies the existence of a simplicial retraction $h^* \colon T^* \to S$. Set $h^*$ to be the identity on $V(P)$, and consider a vertex $v\in V(T)$ with $h(v) = i$. We set \[h^*(v) = v_i\] if there exists a path in $T$ which connects $v$ to a vertex in $F_i$ such that $h(w) = i$ for every vertex $w$ along this path. If no such path exists, we set 
\[h^*(v) = -v_i.\]
It remains to show that $h^*$ is a simplicial map, and for this it suffices to check that $h^*$ maps edges of $T^*$ to faces (specifically vertices or edges) of $S$. 
Obviously, for a vertex $v\in -F_i$ we must have $h^*(v) \neq v_i$, otherwise we get the desired path in color $i$. 
It is equally obvious that for a vertex $v\in F_i$ we must have $h^*(v)\neq -v_i$.
Consequently, the edges between $V(S)$ and $V(T)$ are mapped to the faces of $S$.   
Moreover, if $\{v,w\}$ is an edge of $T$ such that $h(v) = h(w)$, then $h^*(v) = h^*(w)$ by definition. 
Thus $h^*$ is a simplicial map and the proof is complete.
\end{proof}

\section{Overlap and the fractional Helly theorem}\label{sec:fractional}

\subsection{} Here we prove Proposition \ref{prop:2ndsel implies fractional}. The first step of the argument needs the following simple lemma.

\begin{lemma}\label{lem:K in F}
Let $F = \{C_1, \dots, C_n\}$ be a family of convex sets in $\mathbb{R}^m$ and let $K$ be the nerve of $F$. There is a continuous map $g: K \to \mathbb{R}^m$ such that for every nonempty face $\sigma\in K$ we have $g(\sigma)\subset \bigcup_{i\in \sigma}C_i$.
\end{lemma}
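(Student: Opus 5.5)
The plan is to construct $g$ on the geometric realization of $K$ by working one face at a time, using the barycentric subdivision $\bary(K)$. Since each $C_i$ is convex, the natural choice is to pick points in intersections and interpolate linearly; convexity of the intersections is what keeps the images inside the required sets.

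\textbf{Step 1: images of barycenters.} For every nonempty face $\sigma\in K$, the intersection $\bigcap_{i\in\sigma}C_i$ is nonempty by the definition of the nerve. Choose a point
\[p_\sigma\in\bigcap_{i\in\sigma}C_i .\]
Identify $\|K\|$ with $\|\bary(K)\|$. The vertices of $\bary(K)$ are the barycenters $b_\sigma$ of the faces $\sigma\in K$, and we set $g(b_\sigma)=p_\sigma$.

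\textbf{Step 2: linear extension.} Extend $g$ affinely over each simplex of $\bary(K)$. Such a simplex corresponds to a chain $\sigma_0\subsetneq\sigma_1\subsetneq\cdots\subsetneq\sigma_k$ of faces of $K$. Its image is
\[\conv\{p_{\sigma_0},\dots,p_{\sigma_k}\}.\]
This gives a well-defined continuous map, because it is simplexwise linear on a simplicial complex.

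\textbf{Step 3: verification.} Fix a nonempty face $\sigma\in K$. Every simplex of $\bary(K)$ lying in $\|\sigma\|$ comes from a chain $\sigma_0\subsetneq\cdots\subsetneq\sigma_k\subseteq\sigma$. Since $\sigma_0$ is nonempty, pick any index $i\in\sigma_0$. Then $i\in\sigma_j$ for every $j$, so every point $p_{\sigma_j}$ lies in $C_i$. As $C_i$ is convex, the whole image of this simplex lies in $C_i$. Because $i\in\sigma$, this gives
\[g(\text{simplex})\subseteq C_i\subseteq\bigcup_{l\in\sigma}C_l .\]
Taking the union over all simplices of $\bary(K)$ inside $\sigma$ shows $g(\sigma)\subseteq\bigcup_{l\in\sigma}C_l$.

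In fact the argument gives more: every point of $\|\sigma\|$ lies in the realization of some face $\sigma_0\subseteq\sigma$ whose image sits entirely inside a single $C_i$ with $i\in\sigma$.

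There is no real obstacle in this argument. The only care needed is to use the smallest element $\sigma_0$ of each chain, since it is the one whose indices are common to every face in the chain. Assigning images to the vertices of $K$ alone and extending linearly over $K$ would not work, because convex combinations of points from different sets $C_i$ can leave their union.
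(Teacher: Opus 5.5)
Your proposal is correct and follows the paper's proof: you choose $p_\sigma\in\bigcap_{i\in\sigma}C_i$ for each nonempty face, send the corresponding vertex of $\bary(K)$ to it, extend affinely, and use the minimal face of each chain to trap the image of each simplex in a convex set. The only cosmetic difference is that you place the image inside a single $C_i$ with $i$ in the minimal face, whereas the paper uses the convex intersection $\bigcap_{i}C_i$ over the whole minimal face; either suffices for the containment in $\bigcup_{i\in\sigma}C_i$.
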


\begin{proof}
The vertices of the barycentric subdivision $\bary(K)$ are the nonempty faces of $K$. For every nonempty face $\tau\in K$ we choose an arbitrary point $p_{\tau} \in \bigcap_{i\in \tau} C_i$, and define $g$ on the vertex $\tau$ of $\bary(K)$ by $g(\tau) = p_\tau$; the desired map is obtained by taking the affine extension over each simplex of $\bary(K)$.

To verify the containment, fix a nonempty face $\sigma\in K$. In the barycentric subdivision, $\sigma$ is covered by the simplices of $\bary(K)$ corresponding to flags $\sigma_1 \subset \cdots \subset \sigma_k$ of faces of $\sigma$, and the image under $g$ of such a simplex is the convex hull of $p_{\sigma_1}, \dots, p_{\sigma_k}$. Since the flag is nested, $\sigma_1 \subset \sigma_j$ for every $j$, and therefore
\[p_{\sigma_j} \in \bigcap_{i\in \sigma_j} C_i \subset \bigcap_{i\in \sigma_1} C_i.\]
Thus all of the points $p_{\sigma_1}, \dots, p_{\sigma_k}$ lie in the set $\bigcap_{i\in \sigma_1} C_i$, which is convex, being an intersection of convex sets. Hence their convex hull is contained in $\bigcap_{i\in \sigma_1} C_i$ as well, and in particular in $\bigcup_{i\in \sigma}C_i$ since $\sigma_1 \subset \sigma$. As $\sigma$ is covered by these simplices, we conclude that $g(\sigma)\subset \bigcup_{i\in\sigma}C_i$.
\end{proof}

\begin{proof}[Proof of Proposition \ref{prop:2ndsel implies fractional}]
Let $F = \{C_1, \dots, C_n\}$ be a family of convex sets in $\mathbb{R}^m$, let $K$ be the nerve of $F$, and let $f\colon \mathbb{R}^m \to \mathbb{R}^d$ be a continuous map. Under the assumption that $K$ satisfies the $s$-overlap property in dimension $d$, we want to show that there exists a point $y\in \mathbb{R}^d$ such that $\inv{f}(y)$ intersects at least $\frac{s}{d+1} n$ members of $F$.
 
Let $g \colon K \to \mathbb{R}^m$ be the map from Lemma \ref{lem:K in F}. 
Taking the composition, we get a map $h = f \circ g \colon K \to \mathbb{R}^d$, 
for which there is a point $y\in \mathbb{R}^d$ such that $\inv{h}(y)$ intersects at least $s \binom{n}{d+1}$ of the $d$-faces of $K$, and we let $Z \subset \faces{K}{d}$ be the subset of $d$ faces of $K$ that are intersected by $\inv{h}(y)$. By Lemma \ref{lem:K in F}, for every $\sigma\in Z$, the image $g(\sigma)$ is contained in $\bigcup_{i\in \sigma}C_i$, which means that $\inv{f}(y)\cap C_i\neq \emptyset$ for some $i\in \sigma$. Since a vertex of $K$ is contained in at most $\binom{n-1}{d}$ distinct faces in $Z$, it follows that $\inv{f}(y)$ intersects at least
\[\frac{s\binom{n}{d+1}}{\binom{n-1}{d}}\geq s\frac{n}{d+1}\]
distinct members of $F$. 
\end{proof}

\subsection{} Here we prove Proposition \ref{prop:weak epsilon}.
We mimic the proof of the weak $\varepsilon$-net theorem for convex sets which uses B{\'a}r{\'a}ny's selection theorem. This is possible because we have Gromov's generalization which asserts that the class of continuous maps from $\Delta^N$ to $\mathbb{R}^d$ satisfies the $s$-overlap property in dimension $d$, for $s = \frac{2^d}{(d+1)(d+1)!}$. (Importantly, $s$ depends only on $d$.)

\begin{proof}[Proof of Proposition \ref{prop:weak epsilon}]
We proceed by selecting the points greedily. Set $r = \lceil \varepsilon N \rceil$, and for $i\geq 1$, suppose there is an $r$-face $\sigma_i \in \Delta^N$ such that its image $f(\sigma_i)$ is not intersected by previously selected points $y_1, \dots, y_{i-1}$. In particular, an $r$-face $\sigma_1$ exists since we have not selected any points $y_i$ yet. Applying Gromov's overlap theorem to the restriction of $f$ to the face $\sigma_i$, we find a point $y_i\in \mathbb{R}^d$ such that $\inv{f}(y_i)$ intersects at least 
\[s\cdot \binom{r}{d+1} = s\cdot \binom{\lceil \varepsilon N\rceil}{d+1} \geq s\cdot \varepsilon^{d+1}\cdot \binom{N}{d+1}\]
$d$-faces of $\sigma_i$. Thus every time we find a new point $y_i$, the fiber $\inv{f}(y_i)$ intersects at least an $(s\cdot \varepsilon^{d+1})$-fraction of simplices of $\Delta^N$, none of which were intersected by $\inv{f}(y_1) \cup \cdots \cup \inv{f}(y_{i-1})$. Therefore this selection process can be repeated at most $p \leq s^{-1}\cdot \varepsilon^{-(d+1)}$ times. 
\end{proof}

\subsection{}
Here we prove Proposition \ref{prop:random}. The main technical tool is Gromov's overlap theorem, or rather the cosystolic expansion version due to Dotterer--Kaufman--Wagner \cite{dotterer_kaufman_wagner}, combined with coboundary expansion estimates for random complexes due to Wild \cite{wild_thesis} and Dotterrer--Kahle \cite{dotterer_kahle}.

\begin{proof}[Proof of Proposition \ref{prop:random}]
Since $[n]^{*(d+1)}$ is homotopy equivalent to a wedge of $d$-spheres, its cohomology in dimension $<d$ vanishes. Therefore Wild's recursive coboundary expansion bound
\cite[Proposition~8.6]{wild_thesis} is equivalent to a
$L_0$-cofilling inequality in dimension $d$ with $L_0=(3\cdot 2^{d-1}-1)/(d+1)$.

By the Chernoff--Hoeffding estimates of Dotterrer and Kahle \cite{dotterer_kahle},
a sufficiently dense random subcomplex inherits a cofilling inequality with the
constant at most doubled, together with vanishing cohomology. If we choose slack parameter
$\tfrac14$ on their theorem, it suffices that $p\ge 32(d+2)(d+1)L_0\log n/n$. Thus, with probability
tending to one, $Y$ satisfies a $2L_0$-cofilling inequality in dimension $d$ and has
$\widetilde H^{k}(Y;\mathbb F_2)=0$ for $0\le k\le d-1$. In the lower dimensions the
crude bound $d2^{d}$ applies, and $2L_0\le d2^{d}$, so $Y$ satisfies an $L$-cofilling
inequality in all dimensions $1,\dots,d$ with $L:=d2^{d}$. Vanishing cohomology
leaves no nontrivial cocycles below dimension $d$, so the cosystole hypothesis holds
with $\vartheta=1$. Finally, $Y$ is locally $\varepsilon$-sparse with
$\varepsilon=4(d+1)/n$, with probability tending to one.

The topological overlap theorem \cite[Theorem 8]{dotterer_kaufman_wagner}, applied with
$L=d2^{d}$ and $\vartheta=1$, then yields an overlap fraction of at least
\[
   \frac{1}{2(d+1)!\,L^{d}}-O(d\varepsilon)
   =\frac{1}{2^{\,d^{2}+1}d^{d}(d+1)!}-O(d\varepsilon).
\]
Taking $n\ge 32d^{2}(d+1)(d+1)!(d2^{d})^{d}$ gives
$d\varepsilon\le 1/(2^{\,d^{2}+3}d^{\,d+1}(d+1)!)$, which is a $1/4d$ fraction of the
main term; subtracting it leaves an overlap fraction of at least
\[
   \mu_d=\frac{1}{8d\,(d+1)!\,(d2^{d})^{d}}
        =\frac{1}{2^{\,d^{2}+3}\,d^{\,d+1}\,(d+1)!}
\]
(enlarging $n$ by the implicit constant in $O(d\varepsilon)$ if needed). 
\end{proof}

\subsection{} Here we prove Theorem \ref{tame}. We will prove that the class of $\ell$-tame maps from $K$ to $\mathbb{R}^d$ satisfies the $s$-overlap property in dimension $d$. This means the class of piecewise linear maps from $K$ to $\mathbb{R}^d$ such that for any family of pairwise disjoint faces $\sigma_1,\sigma_2, \ldots \sigma_m\in X$, where $\sum_{i=1}^m (d- \dim(\sigma_i))=d$, we have 
\[\left| \bigcap_{i=1}^m f(\sigma_i) \right|\le \ell.\]

The proof is similar to the proof for face-wise affine maps from \cite{abfk}. We will need the following variant of the colored Tverberg theorem of Zivaljevi\'c and Vre\'cica \cite{zi-vre92}. In the statement and its proof, let $d>0$, let $p$ be a prime, and set $T = [2p-1]^{*(d+1)}$.

\begin{theorem}\label{col_tv} For any continuous map $f\colon T \to \mathbb R^d$, there exists a family of pairwise disjoint faces $\sigma_1,\sigma_2, \ldots \sigma_p \in T$, with $\sum_{i=1}^p |\sigma_i|=(d+1)(p-1)+1$ such that $\bigcap_{i=1}^p f(\sigma_i)\neq \emptyset$.
 \end{theorem}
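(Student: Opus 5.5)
The plan is to combine the colored Tverberg theorem of \v{Z}ivaljevi\'c and Vre\'cica \cite{zi-vre92} with a dimension count for generic maps, and then pass to a limit to handle arbitrary continuous maps. The theorem of \cite{zi-vre92} is used as a black box. It says that for $T=[2p-1]^{*(d+1)}$, with $p$ prime, every continuous $f\colon T\to\mathbb{R}^d$ admits $p$ pairwise disjoint rainbow faces $\tau_1,\dots,\tau_p$ of $T$ with $\bigcap_i f(\tau_i)\neq\emptyset$. This gives everything in Theorem \ref{col_tv} except the exact count $\sum|\sigma_i|=(d+1)(p-1)+1$; a priori we only know $\sum|\tau_i|\le p(d+1)$. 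The work is therefore to shrink the faces while keeping a common point of intersection.

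\emph{Step 1 (generic PL case).} Let $f$ be a piecewise linear map, linear on the simplices of some subdivision of $T$, and in general position. For a $p$-tuple of pairwise disjoint faces $(\sigma_1,\dots,\sigma_p)$ consider the coincidence set
\[
Z(\sigma_1,\dots,\sigma_p)=\{(x_1,\dots,x_p)\in\sigma_1\times\cdots\times\sigma_p : f(x_1)=\cdots=f(x_p)\}.
\]
It is cut out of a polyhedron of dimension $\sum(|\sigma_i|-1)$ by $d(p-1)$ equations. For generic $f$, transversality on every product of cells of the subdivision makes $Z$ either empty or a compact polyhedron of dimension exactly
\[
\sum|\sigma_i|-(d+1)(p-1)-1 .
\]
Two consequences follow. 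First, nonemptiness forces $\sum|\sigma_i|\ge (d+1)(p-1)+1$. Second, whenever the inequality is strict, $Z$ has positive dimension. Now start from the \v{Z}ivaljevi\'c--Vre\'cica tuple and choose a tuple $\sigma_i\subseteq\tau_i$ that is minimal under inclusion among tuples with $Z\neq\emptyset$. If $\sum|\sigma_i|>(d+1)(p-1)+1$, then $Z$ is a compact polyhedron of positive dimension in $\prod\sigma_i$. Such a set cannot lie entirely in the product of relative interiors, since its relative boundary there would be empty. So some point of $Z$ has a coordinate $x_i\in\partial\sigma_i$. Replacing $\sigma_i$ by the proper face containing $x_i$ contradicts minimality; this face is nonempty and disjointness is preserved. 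Hence equality holds for generic PL maps.

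\emph{Step 2 (approximation).} Take generic PL maps $f_k\to f$ uniformly. There are only finitely many tuples of faces of $T$, so after passing to a subsequence Step 1 yields a fixed tuple $(\sigma_1,\dots,\sigma_p)$ with the correct count and points $x_i^{(k)}\in\sigma_i$ satisfying $f_k(x_1^{(k)})=\cdots=f_k(x_p^{(k)})$. By compactness, pass to convergent subsequences $x_i^{(k)}\to x_i\in\sigma_i$. Uniform convergence then gives $f(x_1)=\cdots=f(x_p)$, so $\bigcap_i f(\sigma_i)\neq\emptyset$.

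The main obstacle is Step 1: making precise the general-position statement that $Z$ has exactly the expected dimension, and is therefore empty when that dimension is negative, simultaneously for all tuples of faces and all products of cells of the subdivision. I would first try a small perturbation of the vertex images of the subdivision into general position, so that every system of affine equations on every product of simplices is transverse. I would then argue that the boundary of $Z$ inside $\prod\sigma_i$ lies over $\partial(\prod\sigma_i)$, using that $Z$ is a PL manifold with boundary of the stated dimension.
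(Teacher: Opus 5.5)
\textbf{Gap in Step 1: the shrinking argument.} The failing claim is that a nonempty compact $Z$ of positive dimension cannot lie entirely in the product of the relative interiors. Grant that $Z$ is a PL manifold with $\partial Z$ lying over $\partial(\prod\sigma_i)$. Nothing then stops $Z$ from being a \emph{closed} manifold, with empty boundary, sitting in the interior. For PL maps on a subdivision this really happens.

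Take $p=2$, $d=1$, so $T=[3]*[3]$, and two disjoint edges $\tau_1,\tau_2$. Subdivide each at its midpoint and let $f$ take the generic values $0,2,0.01$ along $\tau_1$ and $3,1,2.99$ along $\tau_2$. Then $f(\tau_1)=[0,2]$ and $f(\tau_2)=[1,3]$, and $Z(\tau_1,\tau_2)$ is a quadrilateral loop over the common values $[1,2]$, lying inside the open square $\tau_1\times\tau_2$. The endpoint values $0,0.01$ miss $[1,3]$ and $3,2.99$ miss $[0,2]$, so no proper sub-tuple has intersecting images.

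So $(\tau_1,\tau_2)$ is inclusion-minimal with $|\tau_1|+|\tau_2|=4>3$, yet it is a legitimate output of the colored Tverberg theorem. Since you use that theorem as a black box, you have no control over which tuple it returns, and your shrinking can stop at the wrong count. Your convexity intuition holds only for maps affine on the faces of $T$ itself, where $Z$ is a polytope and does reach the boundary. Such maps cannot uniformly approximate an arbitrary continuous map, so they cannot feed Step 2. Step 2 itself is fine.

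\textbf{What the paper does instead.} The paper does not use colored Tverberg as a black box; it reruns the \v{Z}ivaljevi\'c--Vre\'cica argument on a smaller configuration space. The simplices of the $(d+1)(p-1)$-skeleton of the deleted join $T^{*p}_{\Delta(2)}$ are exactly the joins $\sigma_1*\cdots*\sigma_p$ of pairwise disjoint faces with $\sum|\sigma_i|\le (d+1)(p-1)+1$. This skeleton is $\mathbb{Z}_p$-invariant. Since $T^{*p}_{\Delta(2)}$ is $((d+1)p-2)$-connected, the skeleton is $((d+1)(p-1)-1)$-connected. Dold's theorem therefore rules out an equivariant map from it to $S^{(d+1)(p-1)-1}$. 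So the restricted test map hits the diagonal at a point $\tfrac1p x_1\oplus\cdots\oplus\tfrac1p x_p$, with the $x_i$ in nonempty disjoint faces of total size at most $(d+1)(p-1)+1$. You can pad these faces to exactly that size, since each color class has $2p-1\ge p$ vertices. This handles every continuous $f$ directly, with no general position, minimality or approximation step.
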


\begin{remark} \label{rem: d simplices} If $(d+1)(p-1)+1$ points are partitioned into a set $S$ of $p$ simplices, then $\sum_{\sigma \in S} (d-\dim(\sigma))=d$. Furthermore, if $p>d$ there exists some subset $S'\subset S$ with at most $d$ simplices for which $\sum_{\sigma \in S'} (d-\dim(\sigma))=d$, which implies that the intersection of their images under a generic piecewise linear map must be a $0$-dimensional set (possibly empty).
\end{remark}

The proof is essentially the same as the original proof of Zivaljevi\'c and Vre\'cica \cite{zi-vre92} (and \cite{blvz94}). We refer the reader to \cite{matousek2003using} for more details and provide a quick sketch of the proof here. We begin with a proof of the classical colorful Tverberg theorem and explain at the end what needs to be modified to obtain the condition on the number of vertices.

 \begin{proof}[Proof of Theorem \ref{col_tv}]
The continuous map $f$ induces a continuous map from the $p$-fold 2-wise deleted join of $T$ to the $p$-wise join of $\mathbb R^d$. The abelian group with $p$ elements $\mathbb Z_p$ acts freely permuting the factors of this deleted join, and acts by linear isometries permuting the factors of $\mathbb R^d$ in the $p$-fold join. Therefore the induced map
$$f^{*p}\colon T^{*p}_{\Delta(2)} \to \mathbb R^{(d+1)p-1},$$
is $\mathbb Z_p$ equivariant. 
If $f$ doesn't have a $p$-multiple point, then the image of $f^{*p}$ avoids the $d$-dimensional diagonal in $(\mathbb {R}^d)^{*p}=\mathbb R^{(d+1)p-1}$. So we can $\mathbb Z_p$-equivariantly deformation retract $f^{*p}$, to a $\mathbb Z_p$-equivariant map $$g\colon T^{*p}_{\Delta(2)} \to \mathbb S^{(d+1)(p-1)-1}.$$
To show that there does not exist such an equivariant map, a lower bound on the connectedness of $T^{*p}_{\Delta(2)}$ suffices. For this we use the fact that join and deleted join commute, and that if $X$ is $k$-connected and $Y$ is $l$-connected, their join is $(k+l+2)$-connected. So to understand the connectivity of our test space it suffices to know the connectivity of the (so called) \emph{chessboard complex} $[2p-1]^{*p}_{\Delta(2)}$, which is $(p-2)$-connected, as shown in \cite{blvz94}. From this and the join connectivity formula, applied to the $(d+1)$-fold join $T^{*p}_{\Delta(2)}=([2p-1]^{*p}_{\Delta(2)})^{*(d+1)}$, we obtain that $T^{*p}_{\Delta(2)}$ is $((d+1)p-2)$-connected. 

Dold's theorem (see e.g. \cite[Theorem 6.2.6]{matousek2003using}) implies that the equivariant map $g$ does not exist, and we conclude that there exist pairwise disjoint faces $\sigma_1,\sigma_2, \ldots \sigma_p \in T$ such that $\bigcap_{i=1}^p f(\sigma_i)\neq \emptyset$. 

So far we have just rephrased the classical proof of the colored Tverberg theorem. It remains to show the dimension requirement. We use the general fact that a simplicial complex $L$ is $\ell$-connected if and only if its $(\ell+1)$-skeleton is $\ell$-connected. Now observe that the $(d+1)(p-1)$-skeleton of $T^{*p}_{\Delta(2)}$ corresponds precisely to $p$-tuples of disjoint simplices on $(d+1)(p-1)+1$ vertices. This skeleton is also $\big( (d+1)(p-1)-1 \big)$-connected, and so the proof above goes through as before.  
 \end{proof}

We also need the following version of the Erd\H{o}s-Simonovits theorem (see e.g. \cite[Theorem 9.2.2]{Matousek2002}), which we state here in the language of simplicial complexes for convenience

\begin{theorem}[Erd\H{o}s-Simonovits]\label{ES}
For all integers $k$ and $t$, and positive real number $\alpha \in (0,1]$ there exists a $\delta>0$ such that the following holds. If $K$ is a simplicial complex with at least $\alpha n^k$ faces of dimension $(k-1)$, then $K$ contains at least $\delta n^{kt}$ distinct copies of $[t]^{*k}$.
\end{theorem}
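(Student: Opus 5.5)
We will prove Theorem~\ref{ES} by the classical Erd\H{o}s supersaturation argument, with induction on $k$. Throughout we regard $K$ through its set of $(k-1)$-faces, which is a $k$-uniform hypergraph on $n$ vertices with at least $\alpha n^k$ edges. A copy of $[t]^{*k}$ is then $k$ pairwise disjoint $t$-sets $A_1,\dots,A_k$ such that every transversal $\{a_1,\dots,a_k\}$ with $a_i\in A_i$ is a face.

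The statement should be read for $n\ge n_0(k,t,\alpha)$. For $n<kt$ there are no copies at all, so small $n$ cannot be absorbed into $\delta$. For $n\ge n_0$, shrinking $\delta$ only affects the constant.

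\emph{Base case $k=1$.} At least $\alpha n$ vertices are faces. These give $\binom{\lceil\alpha n\rceil}{t}\ge \delta n^t$ copies of $[t]$ once $n$ is large.

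\emph{Inductive step.} Assume the statement for $k-1$. For each $(k-1)$-set $S$, let $d(S)$ be the number of vertices $v\notin S$ with $S\cup\{v\}\in K$. Then
\[
\sum_S d(S)=k\,|\faces{K}{k-1}|\ge k\alpha n^k .
\]
Hence the average of $d(S)$ over the $\binom{n}{k-1}$ choices of $S$ is at least $\alpha' n$ for some $\alpha'=\alpha'(k,\alpha)>0$.

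Next we double count pairs $(T,S)$, where $T$ is a $t$-set, $S$ is a $(k-1)$-set disjoint from $T$, and $S\cup\{v\}\in K$ for every $v\in T$. The number of such pairs is $\sum_S\binom{d(S)}{t}$. By convexity of $x\mapsto\binom{x}{t}$ (Jensen), this is at least $\binom{n}{k-1}\binom{\alpha' n}{t}\ge c\,n^{k-1+t}$ for large $n$.

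For each $t$-set $T$, let $L_T$ be the $(k-1)$-uniform family of those $S$ forming a pair with $T$. This is the common link of $T$, and it automatically lives on the vertex set $[n]\setminus T$. Since there are at most $n^t$ sets $T$ and $|L_T|\le n^{k-1}$, averaging shows that at least $\tfrac{c}{2}n^t$ sets $T$ satisfy $|L_T|\ge \tfrac{c}{2}n^{k-1}$.

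For each such $T$, apply the induction hypothesis to $L_T$ with density $c/2$. (If one insists on simplicial complexes, take the downward closure; it has the same top faces.) This yields at least $\delta' n^{(k-1)t}$ copies $A_1,\dots,A_{k-1}$ of $[t]^{*(k-1)}$ inside $L_T$, all disjoint from $T$. Setting $A_k=T$ gives a copy of $[t]^{*k}$ in $K$: every transversal is $S\cup\{v\}$ with $S\in L_T$ and $v\in T$.

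A given copy of $[t]^{*k}$ arises in this way at most $k$ times, once for each choice of which part plays the role of $T$. Therefore $K$ contains at least
\[
\frac{1}{k}\cdot\frac{c}{2}\,n^t\cdot \delta' n^{(k-1)t}=\delta\, n^{kt}
\]
copies of $[t]^{*k}$.

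The main point requiring care is the Jensen step. It needs the average degree to be linear in $n$, so that $\binom{x}{t}$ behaves like $x^t/t!$ in the relevant range. This is exactly where the hypothesis $n\ge n_0$ enters, both here and in the base case. The remaining steps are routine averaging and bookkeeping.
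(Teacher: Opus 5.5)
Your proof is correct. It is also necessarily a different route from the paper's, because the paper gives no proof of Theorem~\ref{ES}: it imports the result from \cite[Theorem 9.2.2]{Matousek2002}. You give instead the classical self-contained Erd\H{o}s induction on $k$ through common links of $t$-sets. The steps check out:
\begin{itemize}
\item the double count $\sum_S\binom{d(S)}{t}=\sum_T|L_T|$ is right;
\item averaging does give at least $\frac{c}{2}n^t$ sets $T$ with large link;
\item copies found inside $L_T$ automatically avoid $T$, since every vertex lies in some transversal and transversals lie in $L_T$;
\item the overcount is at most $k$-to-one.
\end{itemize}
Your caveat that the statement must be read for $n\ge n_0(k,t,\alpha)$ is also correct. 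As printed it fails for small $n$: take $k=t=2$, $\alpha=1/100$, $n=4$ and a single edge. In the cited version this is handled by a lower bound on $\alpha$ in terms of $n$.

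One small technical fix. For $t\ge 3$ the map $x\mapsto\binom{x}{t}$ is not convex on all of $[0,\infty)$, and the $d(S)$ can take small values. Apply Jensen instead to the function equal to $\binom{x}{t}$ for $x\ge t-1$ and $0$ otherwise. It is convex, nondecreasing, and agrees with $\binom{x}{t}$ on nonnegative integers.

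What the citation buys over your route is quantitative. You keep only the $T$ with large link and multiply their number by the inductive bound. This loses a factor of order $\alpha^{t}$ at each level, so you get $\delta$ of order $\alpha^{t+t^2+\cdots+t^k}$. The cited form gives $c\,\alpha^{t^k}$. The paper's remark after Theorem~\ref{tame}, claiming $s=\Omega(\alpha^{(4d-1)^{d+1}})$, relies on that sharper form. Recovering it would need a power-law inductive hypothesis combined with a power-mean step over the good $T$. For the qualitative statement actually used in the proof of Theorem~\ref{tame}, your argument is entirely sufficient.
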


\begin{proof}[Proof of Theorem \ref{tame}] We apply Theorem \ref{ES} with $k=d+1$, and choose $t$ to be the smallest number such that we can apply Theorem \ref{col_tv} to $[t]^{*(d+1)}$ in order to find $d+1$ pairwise disjoint simplices $\sigma_0$, $\sigma_1$, $\ldots$,  $\sigma_d$ with $\bigcap_{i=0}^{d} f(\sigma_i)\neq \emptyset$. 
For any $r\geq 2$ there is a prime between $r$ and $2r$, and so Theorem \ref{col_tv} implies that $t\leq 4d-1$.

Applying Theorem \ref{ES} yields $\delta n^{t(d+1)}$ distinct copies of $[t]^{*(d+1)}$. For a fixed copy of $[t]^{*(d+1)}$ we find $(d+1)$ pairwise disjoint simplices $\sigma_0$, $\dots$, $\sigma_{d}$ whose images under $f$ have a point in common. 
As noted in Remark \ref{rem: d simplices} we may assume that $\bigcap_{i=1}^{d} f(\sigma_i)$ is a 0-dimensional set and so by the $\ell$-tameness consists of at most $\ell$ points,  we call these points \emph{crossing points} from now on. At least one of the crossing points is contained in the remaining full dimensional simplex $\sigma_0$, and by the genericity of the map $f$, any such crossing point is in the interior of $\sigma_0$. 

Now we use a double counting argument, counting incidences between crossing points and the full-dimensional simplices whose interior contains them.

Each time a $d$-tuple of (vertex) disjoint simplices gives rise to a set of crossing points, the total number of vertices equals $(d+1)(d-1)+1 = d^2$. Therefore there are at most a total of $\kappa \, n^{d^2}$ possible crossing points, for some positive $\kappa$ that depends on $\ell$ and $d$. The additional dependency on $d$ accounts for the number of distinct partitions of $d^2$ vertices that could give rise to a set of crossing points.

For each of the $\delta n^{t(d+1)}$ distinct copies of $[t]^{*(d+1)}$, by our choice of $t$ we obtain at least one $d$-dimensional simplex that contains a crossing point in its interior (this is the content of Remark \ref{rem: d simplices}, together with the $\ell$-tameness condition). This gives a lower bound on the number of (copy, full-dimensional simplex, crossing point) incidences.

Conversely, we count the same incidences by first fixing a full-dimensional simplex together with the at most $d$ simplices that generate the crossing, and then completing this configuration to a copy of $[t]^{*(d+1)}$. The full-dimensional simplex accounts for $d+1$ vertices, and as argued above an additional $d^2$ vertices generate the crossings. Therefore such a completion can be done in at most $b\, n^{t(d+1)-(d^2+d+1)}$ ways, where $b$ is a constant depending on $d$ which accounts for the different ways that the vertices can be split into $d+1$ parts to form $[t]^{*(d+1)}$.
So the average full-dimensional simplex contains at least
\[\frac{\delta n^{t(d+1)}}{b\, n^{t(d+1)-(d^2+d+1)}}=\frac{\delta}{b}  n^{d^2+d+1}\]
crossing points. Dividing by the total number $\kappa\,n^{d^2}$ of crossing points, the pigeonhole principle yields a crossing point $y$ that is contained in the interior of at least 
\[\frac{\delta}{ \kappa\, b}\, n^{d+1}\]
full-dimensional simplices. Each such full-dimensional simplex $\sigma$ satisfies $y \in f(\sigma)$, that is, $\inv{f}(y)\cap \sigma \neq \emptyset$. Since $K$ has at most $\binom{n}{d+1} = O(n^{d+1})$ faces of dimension $d$, the fiber $\inv{f}(y)$ meets a positive fraction of them, which is the desired $s$-overlap property. 
\end{proof}

Using a more quantitatively precise version of the Erd{\H o}s--Simonovits theorem it is possible to obtain bounds on the dependency on the $s$-overlap property. In particular we can get a lower bound on $s$ of $\Omega\!\left(\alpha^{(4d-1)^{d+1}}\right)$. We omit the details.

We only gave a proof of Theorem \ref{tame} for maps into $\mathbb{R}^d$. Everything generalizes readily to a $d$-manifold $M^d$ using the colorful Tverberg theorem for maps to manifolds by Blagojevi\'c, Matschke and Ziegler \cite{BlagojevicMatschkeZiegler2011}. We omit the details.

\section{Concluding remarks}
While the previous approaches towards Helly type theorems used very different strategies in the $d$-Leray case and in the bounded Betti number case, approaching topological Helly theorems via overlap theorems seems relevant for both families of generalizations.

Kalai and Meshulam's topological colorful Helly theorem is more general than ours in two ways, one aspect is matroidal, it would be interesting if a generalization of our theorem could cover that aspect. The second aspect is the assumption on the ambient space; their result holds for any $d$-Leray complex. We now explain what this means and how our approach could be extended to cover this generalization.

A simplicial complex $K$ is \emph{$d$-Leray} if $\tilde H_i(K') = 0$ for every $i\geq d$ and every induced subcomplex $K'\subset K$. 

\begin{claim} Let $Y$ be a $d$-dimensional simplicial complex with vanishing homology in dimension $d$. If $F$ is a good cover in $Y$, then the nerve of $F$ is $d$-Leray.
\end{claim}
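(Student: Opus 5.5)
Every induced subcomplex of the nerve is itself the nerve of a subfamily of $F$. So the plan is to fix an arbitrary subfamily $F' = \{S_i\}_{i\in I}\subset F$, let $N'$ be its nerve (the induced subcomplex of $N(F)$ on the vertex set $I$), and show $\tilde H_j(N')=0$ for all $j\geq d$.

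\textbf{Step 1: nerve theorem.} Set $U' = \bigcup_{i\in I} S_i$. Each $S_i$ is a subcomplex of (a subdivision of) $Y$, and all nonempty intersections are contractible. Borsuk's nerve theorem for finite closed good covers by subcomplexes then gives a homotopy equivalence $N' \simeq U'$. Hence $\tilde H_j(N')\cong \tilde H_j(U')$ for all $j$, and it suffices to prove $\tilde H_j(U')=0$ for $j\geq d$.

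\textbf{Step 2: homology of subcomplexes.} Here $U'$ is a subcomplex of $Y$ (after passing to a common subdivision if needed), so $\dim U'\leq d$.
\begin{itemize}
\item For $j>d$: $\tilde H_j(U')=0$ because the simplicial chain groups vanish above dimension $d$.
\item For $j=d$: there are no $(d+1)$-chains, so $H_d(U') = Z_d(U')$, the group of $d$-cycles. A $d$-cycle of $U'$ is also a $d$-cycle of $Y$, and since $Y$ has no $(d+1)$-simplices, $H_d(Y)=Z_d(Y)$. The inclusion therefore induces an injection $Z_d(U')\hookrightarrow Z_d(Y)=H_d(Y)=0$, so $H_d(U')=0$.
\end{itemize}
This works over any coefficient ring, so the conclusion holds whatever coefficients the Leray condition is taken with. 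Together with Step 1, this shows every induced subcomplex of the nerve has vanishing homology in dimensions $\geq d$, i.e.\ the nerve is $d$-Leray.

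\textbf{Main obstacle.} The only delicate point is applying the nerve theorem correctly. The members of $F$ are closed sets, not open ones, so I would use the version for covers by subcomplexes of a triangulation. Alternatively, one can thicken each $S_i$ to a small open regular neighborhood; this preserves both the intersection pattern and the contractibility of intersections.

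The good-cover hypothesis is used only here: the homological vanishing in Step 2 holds for any subcomplex of $Y$, and contractibility of intersections is what transfers it to the nerve.
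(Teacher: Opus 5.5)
Your proof is correct and is essentially the paper's argument: the nerve theorem identifies each induced subcomplex of the nerve with the union $U'$ of the corresponding subfamily, and then $\dim U'\le d$ together with $H_d(U')=Z_d(U')\subset Z_d(Y)=H_d(Y)=0$ gives vanishing in all degrees $\ge d$. One small caveat: your side remark on coefficients holds only if the hypothesis $H_d(Y;R)=0$ is imposed over the same ring $R$ used in the Leray condition, because integral vanishing does not suffice (the closed triangles of a triangulated $\mathbb{RP}^2$ form a good cover whose nerve has $\tilde H_2(\,\cdot\,;\mathbb{F}_2)\neq 0$, although $H_2(\mathbb{RP}^2;\mathbb{Z})=0$).
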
 

\begin{proof}
For a subset of vertices $S$, put \(X_S := \bigcup_{i\in S} X_i\), and
\(
N[S] = N\big(\textstyle\bigcup_{i \in S}X_i\big).
\)
By the Nerve Theorem,
\(
N[S] \;\simeq\; X_S.
\)
Now $X_S$ is a subcomplex of $X$, so $\dim X_S \le d$ and therefore $H_i(X_S) = 0$ for all $i > d$.
Since $X$ has no $(d{+}1)$-simplices
\(
H_d(X) = Z_d(X) ,
\)
and by hypothesis $H_d(X)=0$. Since $X_S$ is a subcomplex of $X$, $Z_d(X_S) = 0$ and therefore $H_d(X_S)=0$.
Combining the above, $\tilde H_i(N[S]) \cong \tilde H_i(X_S) = 0$ for every
$i \ge d$ and every $S \subseteq [n]$. 
\end{proof}

For $d=1$, the converse claim also holds, that is, $1$-Leray complexes are clique complexes of chordal graphs, which can be represented as intersection graphs of subtrees of a tree. A possible generalization would be the following.

\begin{problem} Let $K$ be a $d$-Leray complex. Does there exist a $d$-dimensional simplicial complex $Y$ with vanishing homology in dimension $d$, and good cover $F$ in $Y$ such that the nerve of $F$ is isomorphic to $K$?
\end{problem}

Here we describe a different approach to establishing the overlap versions of Helly type theorems. Consider a family $F$ of convex sets in $\mathbb{R}^m$ and let $K$ be the nerve of $F$. For a continuous map $f: \mathbb{R}^m \to \mathbb{R}^d$ let $M$ be the nerve of $f(F)$. Clearly $K$ is a subcomplex of $M$. 

\begin{problem}\label{con:Leray Sandwich}
In the setting described above, does there exist a $d$-Leray complex $L$ such that $K\subset L \subset M$?
\end{problem}

An affirmative answer to Problem \ref{con:Leray Sandwich} would imply that any overlap Helly theorem is a direct consequence of the corresponding Helly theorem for $d$-Leray complexes. 

In a subsequent paper we provide a positive answer to Problem \ref{con:top2ndsel}) and extract some consequences.

\bibliographystyle{plain}

\bibliography{references}

\end{document}